\numberwithin{equation}{section}
\newtheorem{theo}{Theorem} %[section]
\newtheorem{lem}{Lemma}
\newtheorem{mcor}{Corollary}
\newtheorem{remark}{Remark}
\newtheorem{prop}{Proposition}
\newtheorem{definition}{Definition}
\newcommand*{\tr}{\mathrm{tr}}
\newcommand*{\D}[1]{\ensuremath{\nabla^{#1}}}
\begin{document}
\title[On equivalence theorems of Minkowski spaces and an application]{On equivalence theorems of Minkowski spaces and an application in Finsler geometry}

\author[Ming Li]{Ming Li$^{\dag}$}

\address{Ming Li: School of Science,
 Chongqing University of Technology,
Chongqing 400054, People's Republic of China }

\email{mingli@cqut.edu.cn}

\thanks{$^{\dag}$~Partially supported by NSFC (grant No. 11501067, 11571184) and the Marie Cuire IRSES project
(grant agreement PIRSES-GA-2012-317721-LIE-DIFF-GEOM)}
%\thanks{$^1$~Partially supported by Chongqing NSF (Grant No. cstc2011jjA00026)
%and the Scientific research project of
%CQUT (Grant No. 01-60-37)}

%\date{\today}

\date{}  % 这一行用来去掉默认的日期显示
\maketitle

\begin{abstract}
In this paper, we first establish an equivalence theorem of Minkowski spaces by using results in centro-affine differential geometry.
As an application in Finsler geometry, we gives some new characterizations of Berwald spaces.

\begin{flushleft}
\textsc{Keywords}: Minkowski space, hyperovaloid,  Cartan tensor, Cartan form, cubic form, Tchebychev form, Chern-Minkowski curvature, Landsberg curvature, Cartan-type form, S-curvature, parallel transport
\

\textsc{Mathematics Subject Classification 2000}: 53B40, 53C60, 53A15
\end{flushleft}

\end{abstract}

%\tableofcontents

\section*{Introduction}

In this paper, a Minkowski space means an $n$ dimensional real vector space $V$ with a smooth strongly convex Minkowski norm $\mathbf{F}$.
Two Minkowski spaces $(V_1,\mathbf{F}_1)$ and  $(V_2,\mathbf{F}_2)$ are equivalent, if there exists a nondegenerate linear homomorphism
$L:V_1\rightarrow V_2$, such that $\mathbf{F}_1=\mathbf{F}_2\circ L$.
For example, all $n$ dimensional vector spaces with Euclidean norms, which are such norms induced by inner products, are equivalent to $(\mathbb{R}^n,\|\cdot\|)$,
where $\|y\|=\sqrt{\sum (y^i)^2}$ for $y=(y^1,\ldots,y^n)\in \mathbb{R}^n$.

On a Minkowski space $(V,\mathbf{F})$, two basic tensors deduced from $\mathbf{F}$ are the fundamental form $\hat{g}$ and
the Cartan tensor $\hat{A}$, respectively. The trace of $\hat{A}$ is the Cartan form $\eta$. In Section 2, we will use these geometric invariants to
establish the following equivalence theorem of Minkowski spaces.
\begin{theo}\label{theo equivalence}
Let $(V_1,\mathbf{F}_1)$ and $(V_2,\mathbf{F}_2)$ be two Minkowski spaces of dimension $n>2$, respectively.
Let $f:V_1\rightarrow V_2$ be a norm preserving map which is a diffeomorphism on $(V_1)_0:=V_1\setminus\{0\}$, and satisfies
\begin{align*}
f(tv)=tf(v), \quad \forall v\in V_1,~\forall t>0.
\end{align*}
Then $\hat{g}_1=f^*\hat{g}_2$ and $\eta_1=f^*\eta_2$ if and only if there exists a nondegenerate linear homomorphism $L\in{\rm Hom}(V_1,V_2)$, such that $f=L$ and
$$\mathbf{F}_1=\mathbf{F}_2\circ L.$$
\end{theo}
This theorem is a generalization of the equivalence theorem for Minkowski plane (cf. Proposition 4.4.1 in \cite{BaoChernShen}, p. 90).
Our strategy is to establish a canonical correspondence between the differential geometry of a Minkowski space
and the centroaffine differential geometry of its indicatrix and then apply the uniqueness theorems for hyperovaloids of Schneider in \cite{Schneider1} in affine geometry.
This idea has been noticed and used by many authors (\cite{Paiva,Bla,Bryant,H,Laugwitz0,Laugwitz1,Laugwitz2,MoHuang}). The mentioned correspondence was first introduced by Laugwitz in \cite{Laugwitz0,Laugwitz1,Laugwitz2}. However, we would like to gives the details in this paper for modern readers in the English world.

We then investigate Finsler geometry.
Let $M$ be an $n$-dimensional smooth manifold and $\pi:TM\to M$
the tangent bundle of $M$. Set $TM_0=TM\setminus0$, where $0$ denotes the zero section of $TM$.
A Finsler structure on $M$ is a continue function
$\mathbf{F}:TM\rightarrow\mathbb{R}$, which is smooth on $TM_0$, such that the restriction $\mathbf{F}_{T_pM}$ is a Minkowski norm for each $p\in M$. A manifold $M$ with a Finsler structure $\mathbf{F}$ is called a Finsler manifold, and denoted by
$(M,\mathbf{F})$. The Finsler structure $\mathbf{F}$ induces a canonical spray on $TM_0$.
This gives rise a natural horizontal subbundle of $T(TM_0)$ or a nonlinear connection of $M$.

On one hand, the natural splitting of $T(TM_0)$ allows us to introduce three vertical tensors $\hat{g}$, $\hat{A}$ and $\eta$ on $TM_0$.
For any $p\in M$, let $i_p:T_pM\hookrightarrow TM$ be the natural embedding mapping. The restrictions $i_p^*\hat{g}$, $i_p^*\hat{A}$ and $i_p^*\eta$ on $T_pM\setminus\{0\}$ are the fundamental tensor, Cartan tensor and Cartan form of $(T_pM, \mathbf{F}_{T_pM})$, respectively. These tensors are geometric invariants which are independent to any linear connection of $(M,\mathbf{F})$.

On another hand, the nonlinear connection gives rise the (nonlinear) parallel transport on the Finsler manifold $(M,\mathbf{F})$.
Let $\sigma:[a,b]\rightarrow M$ be a piecewise smooth curve form $\sigma(a)=p$ to $\sigma(b)=q$,
one can define the parallel transport
\begin{equation*}
  P_{\sigma,t}:T_pM\rightarrow T_{\sigma(t)}M, \quad {\rm for}~t\in[a,b].
\end{equation*}
It is well known that the restricted map $P_{\sigma,t}:T_pM\setminus\{0\}\rightarrow
T_{\sigma(t)}M\setminus\{0\}$ is a norm preserving diffeomorphism,
and satisfies
\begin{align*}
P_{\sigma,t}(\lambda y)=\lambda P_{\sigma,t}(y), \quad \forall ~\lambda>0, \forall~y\in T_pM\setminus\{0\}.
\end{align*}
Parallel transports are usually nonlinear. In fact, $(M,\mathbf{F})$ is a Berwald manifold if and only if all the parallel transports are linear (cf. \cite{Bao,BaoChernShen,ChernShen}). This characterization of Berwald manifold initiates by Ichijy\={o} in \cite{Ichijyo76}.
Combining this fact with Theorem 1, we immediately get the following theorem.
\begin{theo}\label{2}
  Let $(M,\mathbf{F})$ be a Finsler manifold of dimension $n>2$. $(M,\mathbf{F})$ is a Berwald manifold
  if and only if the vertical tensors $\hat{g}$ and $\eta$ are preserved by all parallel transports.
\end{theo}
It is proved in \cite{Aikou10,Bao,ChernShen,Ichijyo78} that $\hat{g}$ is preserved by all parallel transports if and only if
$(M,\mathbf{F})$ is a Landsberg manifold, which has vanishing Landsberg curvature (or referred as the second Cartan tensor in \cite{SV} etc.).

Motivated by Theorem \ref{2} and our previous work \cite{FL}, we investigate $\eta$ and get some interesting properties. Then we give the following characterization theorem of Berwald manifolds.
\begin{theo}\label{3}
Let $(M,\mathbf{F})$ be a Finsler manifold of dimension $n>2$, then the following statements are equivalent:
\begin{enumerate}
\item[(1)] $M$ is a Landsberg manifold with vanishing $S$-curvature;

\item[(2)] $M$ is a Landsberg manifold with $\eta$ is exact;

\item[(3)] $M$ is a Landsberg manifold with $d\eta=0$;

\item[(4)]  $M$ is a Berwald manifold.
\end{enumerate}
\end{theo}

As a corollary, we have the following theorem which gives a positive answer of a problem asked by Zhongmin Shen (cf. \cite{Shen}, p. 322).
\begin{theo}\label{theo L=0 and trB=0}
A Landsberg manifold $(M,\mathbf{F})$ with vanishing mean Berwald curvature must be Berwaldian.
\end{theo}

%%%%%%%%%%%%%%%%%%%%%%%%%%%%%%%%%%%%%%%%%%%%%%%%%%%%%%%%%%%%%%%%%%%%%%%%%%%%%%%%%%%%%%%%%%%%%%

This paper contains 4 sections. In Section 1, we give a brief review of some concepts and results in centroaffine differential
geometry of hypersurfaces in real vector spaces. In  Section 2, we will establish the correspondence between Minkowski spaces and hyperovaloids with origin in its interior. After that, we will proof Theorem \ref{theo equivalence}.
In Section 3, we will study the local geometry of some geometric invariants on Finsler manifolds by Chern connection and its curvature.
Especially, we investigate the Cartan-type form and find some relations with other known geometric invariants.
In the last section, we will use parallel transport to describe some special Finsler manifolds. Then we will prove Theorem \ref{2}--\ref{theo L=0 and trB=0}.

In this paper, lower case Latin indices will run from 1 to $n$ and
lower case Greek indices will run from 1 to $n-1$. We also adopt the
summation convention of Einstein. We will assume in this paper that $n>2$.

\

{\bf Acknowledgements.} The author would like to thank Professor
Huitao Feng for his consistent support and encouragement. The author
would like to express his deep appreciation to Professor Guofang
Wang for his warm hospitality and discussions on Mathematics, while the author stayed in
Mathematical Institute Albert Ludwigs University Freiburg in 2014. The author would like to thank referees for helpful
comments and suggestions.

\section{Review of centroaffine differential geometry of hypersurfaces}
In this section, we would like to review the fundamental equations and some results of an affine hypersurface with centroaffine normalization.
One refers to \cite{LSZ,NS,SSV} for details.

\subsection{Centroaffine normalization of a nondegenerate hypersurface}

\

Let $V$ be a real vector space of dimensional $n$ with a chosen orientation.
Let $V^*$ be the dual space of $V$, and $\langle~,~\rangle:V\times V^*\rightarrow \mathbb{R}$ the canonical pairing.
$V$ has a smooth manifold structure and a flat affine connection $\bar{\nabla}$.

%For a fixed element ${\rm det}\in\Lambda^{n+1}(V^*)$, we define an orientation ${\rm Det}\in\Omega^{n+1}(A)$ by
%${\rm Det}(V_1,\ldots, V_n)(p)={\rm det}(V_1(p),\ldots, V_n(p))$, where $p\in A$, and $V_1,\ldots, V_n\in\Gamma(TA)$.
%It is clear that $\bar{\nabla}{\rm Det}=0$.

Let $x:M\rightarrow V$ be an immersed connected oriented smooth manifold $M$ of dimension $n-1$.
Then for each point of $p\in M$, $dx(T_pM)$ is an $n-1$ dimensional subspace of $T_{x(p)}V$,
and defines an one dimensional subspace $C_pM=\{v^*_p\in V^*|\ker v^*_p=dx(T_pM)\}\subset T^*_{x(p)}V$.
The trivial line bundle $CM=\bigcup_pC_pM$ is called the conormal line bundle of $x$. % Choosing an Euclidean structure of $A$, then $CM$ is dual to the normal line bundle which is trivial.

Let $Y$ be a nowhere vanishing section of $CM$.
If ${\rm rank}(dY,Y)=n$, then $x$ is called a nondegenerate hypesurface. The nondegenerate property is independent to the choice of the conormal field $Y$.
In this paper, we will only discuss nondegenerate hypersurfaces.
Let $y:M\rightarrow V$ be the special vector filed $y(p)=-x(p)$, $\forall p\in M$.
If $\langle Y, y\rangle=1$, then the pair $\{Y,y\}$ is called the centroaffine normalization of $x$.
%A nondegenerate hypersuface $x$ with a given relative normalization $\{Y,y\}$ will be denoted as the triple $\{x,Y,y\}$.
We state the structure equations of $x(M)$ with respect to the centroaffine normalization $\{Y,y\}$ as following,
\begin{align*}%\label{Weingarten eq}
\bar{\nabla}_vy=dy(v)=-dx(v),%\quad{\rm (Weingarten equation)}
\end{align*}
\begin{align}\label{Gauss eq for x}
\bar{\nabla}_vdx(w)=dx(\nabla_vw)+h(v,w)y,%\quad{\rm (Gauss equation for x)}
\end{align}
\begin{align*}%\label{Gauss eq for Y}
\bar{\nabla}^*_vdY(w)=dY(\nabla^*_vw)-h(v,w)Y,
\end{align*}
where $h$ is a nondegenerate symmetric $(0,2)$-tensor and called the induced affine metric,
$\nabla$ and $\nabla^*$ are torsion free affine connections.
These geometric quantities satisfy
\begin{align}
dh(v_1,v_2)=h(\nabla v_1,v_2)+h(v_1,\nabla^*v_2). \label{conjugate connections}
\end{align}
The triple $\{\nabla,h,\nabla^*\}$  are called conjugate connections.
For any triple of conjugate connections $\{\nabla,h,\nabla^*\}$, one can define
$C=\frac{1}{2}(\nabla-\nabla^*)\in\Omega^1(M,{\rm End}(TM))$.
By (\ref{conjugate connections}), the $(0,3)$-tensor $\hat{C}:=h\circ C$ is totally symmetric and called the cubic form of $\{\nabla,h,\nabla^*\}$.
One can prove that
\begin{align}\label{Chat}
\hat{C}=-\frac{1}{2}\nabla h.
\end{align}
For $\{\nabla,h,\nabla^*\}$, the Tchebychev form $\hat{T}$ is defined as the normalized trace of $C$,
\begin{align*}%\label{That1}
\hat{T}=\frac{1}{n-1}{\rm tr}C.
\end{align*}
The Tchebychev field $T$ is the dual of $\hat{T}$ with respect to $h$.

Let $\omega(h)$ be the Riemannian volume of $h$ on $x(M)$ and $\omega$ the induced volume form on $x(M)$ from the orientation of $V$.
Then it is proved that
\begin{align}\label{That2}
\hat{T}=\frac{1}{n-1}d\log\left|\frac{\omega}{\omega(h)}\right|.
\end{align}

\subsection{Uniqueness theorem for hyperovaloids}

\

The local existence and uniqueness theorem for nondegenerate hypersurfaces is classical (cf. \cite{SSV}, 6.3.3). However, there is
a remarkable global uniqueness theorems for hyperovaloids.
We would like to review some facts about hyperovaloid.
\begin{lem}
Let $M$ be an $n-1$ dimensional connected closed smooth manifold.
Let $x:M\rightarrow V$ be a smooth immersion in an $n$ dimensional real vector space $V$.
If the centroaffine normalization of $x(M)$ is nondegenerate, then $x(M)$ is a hyperovaloid.
\end{lem}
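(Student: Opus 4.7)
The plan is to reduce the assertion to the classical Hadamard theorem on closed hypersurfaces with everywhere definite Euclidean second fundamental form, which asserts that such a hypersurface is an embedding onto the boundary of a convex body.

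First, I would relate the centroaffine metric $h$ to a Euclidean second fundamental form. Fix an arbitrary inner product $\langle\cdot,\cdot\rangle$ on $V$. The existence of the global conormal field $Y$ makes $x(M)$ coorientable, so there is a globally defined Euclidean unit normal $\nu$ along $x$; let $II$ denote the corresponding Euclidean second fundamental form. The normalization condition $\langle Y,-x\rangle=1$ implies $x(p)\notin dx(T_pM)$ for every $p\in M$, so the signed support function $\sigma(p):=-\langle x(p),\nu(p)\rangle$ is nowhere zero. Comparing the normal parts of the centroaffine Gauss formula (\ref{Gauss eq for x}) and the usual Euclidean Gauss formula yields the pointwise proportionality $II=\sigma\, h$; in particular, nondegeneracy of $h$ is equivalent to nondegeneracy of $II$.

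Next I would establish definiteness at a single point. Since $M$ is compact, the smooth function $p\mapsto |x(p)|^2$ attains a maximum at some $p_0\in M$. Vanishing of its differential there forces $x(p_0)\perp dx(T_{p_0}M)$, so $x(p_0)$ is a normal vector. A direct second-derivative computation using the Euclidean Gauss formula then gives, for every nonzero $v\in T_{p_0}M$,
\begin{equation*}
|dx(v)|^2+II(v,v)\,\langle x(p_0),\nu(p_0)\rangle\le 0,
\end{equation*}
which forces $II$ to be strictly definite at $p_0$, with sign opposite to that of $\langle x(p_0),\nu(p_0)\rangle$. Because $II$ is nondegenerate throughout the connected manifold $M$ by the first step, its signature is locally constant, hence constant; definiteness at $p_0$ thus propagates to all of $M$. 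The classical Hadamard theorem then implies that $x$ is an embedding onto a closed, strongly convex hypersurface in $V$, which is by definition a hyperovaloid.

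The main obstacle is the first step: pinning down the precise proportionality $II=\sigma\, h$ between the Euclidean and centroaffine second fundamental forms via the signed support function, and verifying that the nondegeneracy hypothesis transfers cleanly across this identification. Once that is in hand, compactness produces definiteness at a single point, connectedness propagates it globally, and Hadamard's theorem delivers convexity.
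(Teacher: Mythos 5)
Your proposal is correct, but note that the paper itself offers no proof of this lemma at all: it is stated as review material in Section 1.3 (``We review some facts about hyperovaloid''), with the surrounding facts credited to the affine-geometry literature ([NS], [SSV], [Schneider1]). So there is no proof in the paper to match; what you have written is the standard self-contained argument one would otherwise have to cite. Checking your steps: the identity $II=\sigma\,h$ follows correctly by decomposing the transversal field $y=-x$ into tangential and normal parts, $-x=dx(Z)+\sigma\nu$ with $\sigma=-\langle x,\nu\rangle$, and comparing the normal components of the centroaffine Gauss equation (1.2) with the Euclidean Gauss formula; $\sigma$ is nowhere zero precisely because $\langle Y,y\rangle=1$ forces $x(p)$ to be transversal to $dx(T_pM)$; the Hessian inequality at a maximum of $|x|^2$ is right (the tangential term $dx(\nabla^E_vv)$ pairs to zero with the normal vector $x(p_0)$, so no geodesic normalization is even needed, and $x(p_0)\neq0$ since the normalization condition excludes the origin from $x(M)$); and constancy of the signature of a nowhere-degenerate symmetric form on a connected manifold propagates definiteness globally. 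One point you should make explicit: the Hadamard theorem for immersed closed hypersurfaces with definite second fundamental form requires $\dim M=n-1\geq2$; for $n=2$ the conclusion fails (locally strictly convex closed immersed curves of higher rotation number are not embedded). This is harmless here only because of the paper's standing assumption $n\geq3$, stated in the Introduction, which is not repeated in the lemma itself, so your write-up should invoke it.
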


\begin{lem}[\cite{NS}, Prop.7.3]
Assume that $x:M\rightarrow V$ is an $n$ dimensional hyperovaloid with respect to the centroaffine normalization. Then

{\upshape(i)}\quad $M$ is diffeomorphic to the $n-1$ dimensional standard sphere $\mathbb{S}^{n-1}$;

{\upshape(ii)}\quad $x$ is an imbedding;

{\upshape(iii)}\quad $x(M)$ is the boundary of a strongly convex body, in which the origin is contained.
\end{lem}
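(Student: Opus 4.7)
The plan is to establish (iii) first, from which (i) and (ii) will follow by classical topology of embedded closed convex hypersurfaces in $\mathbb{R}^n$. The crucial initial step is to upgrade the abstract nondegeneracy of the centroaffine normalization to positive-definiteness of the induced affine metric $h$. Choose any auxiliary Euclidean inner product on $V$ and consider $f(p)=\|x(p)\|^2$ on the compact manifold $M$. At a point $p_0$ where $f$ attains its maximum, $df|_{p_0}=0$ gives $x(p_0)\perp dx(T_{p_0}M)$; differentiating once more and substituting the Gauss equation~(\ref{Gauss eq for x}) together with $y=-x$ yields
\[
\tfrac12\,\mathrm{Hess}\,f(v,v)\big|_{p_0}=\langle dx(v),dx(v)\rangle-\|x(p_0)\|^2\,h(v,v).
\]
Since the Hessian is nonpositive at a maximum while the first term on the right is strictly positive for $v\neq 0$, this forces $h(v,v)>0$. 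Nondegeneracy of $h$ together with connectedness of $M$ then propagates positive-definiteness from $p_0$ to all of $M$.

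With $h$ positive definite everywhere I would compare $h$ to the Euclidean second fundamental form $h^E$ of $x(M)$: decomposing $y=-x$ into its $x(M)$-tangential and $\nu$-normal parts, one reads off $h^E=\langle y,\nu\rangle\,h$ pointwise, a nowhere-vanishing scalar multiple of $h$ that is therefore definite after an appropriate global choice of $\nu$. A classical theorem of Hadamard (a compact immersed $C^\infty$ hypersurface in $\mathbb{R}^n$ with everywhere positive Gauss--Kronecker curvature is an embedded boundary of a strongly convex body) then yields that $x$ is an embedding onto the boundary of a strongly convex body $K\subset V$, proving~(ii) and reducing~(i) to the standard fact that $\partial K$ is diffeomorphic to $\mathbb{S}^{n-1}$.

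To finish (iii) it remains to place the origin inside $K$. The identity $\langle Y,y\rangle=1$ with $y=-x$ gives $\langle Y(p),x(p)\rangle=-1$, hence $x(p)\neq 0$ for all $p$ and $0\notin x(M)$. Moreover, since $\ker Y(p)=dx(T_pM)$, the position vector $x(p)$ has nonzero component along the outward unit normal $\nu(p)$ to $\partial K$; by connectedness of $M$ the continuous function $p\mapsto\langle x(p),\nu(p)\rangle$ has constant sign. If the origin were in the unbounded component of $V\setminus x(M)$, then at a point of $x(M)$ nearest the origin $\nu$ would point toward the origin, while at a farthest point $\nu$ would point away from it, giving $\langle x,\nu\rangle$ of opposite signs at these two points---a contradiction. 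Hence $0\in K$, and as $0\notin\partial K$, in fact $0\in\operatorname{int}(K)$.

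The principal obstacle is Step~1: extracting definiteness of the affine metric from the bare nondegeneracy of the centroaffine normalization on a compact $M$. The essential trick is to feed the Gauss equation~(\ref{Gauss eq for x}) into the Hessian test at the furthest point from the origin for an arbitrary auxiliary Euclidean structure, where the special form $y=-x$ of the centroaffine transversal is what produces a controllable sign. Once definiteness of $h$ is in hand, Steps~2 and~3 reduce to classical Hadamard-type results about closed convex hypersurfaces in affine space.
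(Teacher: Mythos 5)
Your proof is correct, but be aware that the paper supplies no internal proof of this lemma at all: it is quoted directly from Nomizu--Sasaki \cite{NS}, Prop.~7.3, so your blind reconstruction is compared against a citation rather than an argument. Your route is the classical one, and each step checks out. Step~1 (the Hessian of $\|x\|^2$ at a farthest point, combined with the Gauss equation (\ref{Gauss eq for x}) and $\langle x,y\rangle=-\|x\|^2$, then constancy of the signature of a nondegenerate $h$ on connected $M$) is in fact precisely the content of the paper's \emph{preceding} lemma (nondegenerate centroaffine normalization on a closed manifold implies hyperovaloid); under the stated hypothesis ``hyperovaloid'' you could have taken definiteness of $h$ as given, so your Step~1 proves that auxiliary lemma as a byproduct rather than being needed here. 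Step~2's pointwise identity $h^E=\langle y,\nu\rangle h$ is correct (compare normal components of the centroaffine and Euclidean Gauss equations), and $\langle Y,y\rangle=1$ with $\ker Y=dx(TM)$ does guarantee the conformal factor never vanishes; note, however, that essentially all of the global depth of the lemma is outsourced to the Hadamard--Chern--Lashof theorem for compact \emph{immersed} hypersurfaces of positive Gauss--Kronecker curvature, which requires $\dim M\geq 2$, i.e.\ $n\geq 3$ --- this matches the paper's standing assumption $n\geq3$, but you should say so explicitly, since for $n=2$ the conclusion fails for immersed curves of positive curvature winding more than once. Step~3 is valid; a marginally shorter finish: since $\langle x,\nu\rangle$ is nowhere zero and positive at the farthest point, it is positive everywhere, so the origin lies strictly inside every supporting half-space $\{z:\langle z-x(p),\nu(p)\rangle\leq 0\}$ of the convex body $K$, hence in $\operatorname{int}(K)$, without the nearest/farthest dichotomy. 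In sum: a complete and correct proof of a statement the paper merely cites, modulo the explicitly named classical convexity theorem.
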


\begin{remark}
In \cite{NS}, two terms about convexity are used. They are ``locally strictly convex'' and ``globally strictly convex'' respectively.
But in \cite{BaoChernShen}, the convexity ``locally strictly convex'' is named as ``strongly convex'', and ``globally strictly convex'' as ``strictly convex''.
It is proved that strongly convexity implies strictly convexity in \cite{BaoChernShen}. Here we follow \cite{BaoChernShen} terminologically.
\end{remark}

Now we are going to state a remarkable uniqueness theorem for hyperovaloids with centroaffine normalizations, which is due to R. Schneider\cite{Schneider1}.

\begin{lem}[\cite{Schneider1}, Satz 4.2]\label{global unique c}
Let $x_i:M\rightarrow V_i$ be two nondegenerate immersions of a connected closed $n-1$ dimensional manifold $M$ in
an $n>2$ dimensional real vector space $V_i$ with the
centroaffine normalizations $\{Y_i,y_i\}$, $i=1,2$. Let
$\hat{T}_i$ be the Tchebychev forms of
$x_i$, $i=1,2$, respectively.

Then $h_1=h_2$ and $\hat{T}_1=\hat{T}_2$ if and only if there exists a nondegenerate linear homomorphism $L\in{\rm Hom}(V_1,V_2)$, such that
$$x_2=L\circ x_1.$$
\end{lem}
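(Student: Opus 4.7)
My plan is to separate the trivial necessity from the substantive sufficiency, and to reduce sufficiency to the local uniqueness theorem (Lemma 2) by showing $\hat{C}_1 = \hat{C}_2$ pointwise on $M$. The necessity is immediate: if $x_2 = L\circ x_1$ for a nondegenerate $L\in\mathrm{Hom}(V_1,V_2)$, then $dx_2 = L\,dx_1$, $y_2 = Ly_1$, and the conormal transports as $Y_2 = (L^{*})^{-1}Y_1$, so the pairing condition $\langle Y_2,y_2\rangle = 1$ is preserved and the structure equations (\ref{Weingarten eq})--(\ref{Gauss eq for Y}) are $L$-equivariant; hence $h_1 = h_2$ and, in particular, $\hat{T}_1 = \hat{T}_2$.

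For sufficiency, assume $h_1 = h_2 =: h$ and $\hat{T}_1 = \hat{T}_2$, and set $\hat{D} := \hat{C}_1 - \hat{C}_2$ with $(1,2)$-version $D$. I would first record four properties of $\hat{D}$ that I intend to exploit:
\begin{itemize}
\item $\hat{D}$ is totally symmetric, as a difference of totally symmetric tensors;
\item $\hat{D}$ is $h$-traceless, since $\hat{T}_1 = \hat{T}_2$ forces $\mathrm{tr}\, D = 0$;
\item subtracting (\ref{nablah C intergral condition c}) applied to $C_1$ and $C_2$, the tensor $\nabla^h \hat{D}$ is totally symmetric in all four arguments (a Codazzi-type condition on the difference);
\item subtracting (\ref{Rh intergral condition c}) applied to $C_1$ and $C_2$ yields the purely algebraic identity
\[
C_1(C_1(U)W)V - C_1(C_1(V)W)U = C_2(C_2(U)W)V - C_2(C_2(V)W)U,
\]
which expands into a quadratic relation tying $D$ to the symmetric combination $C_1 + C_2$.
\end{itemize}

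By Lemma 5, $M$ is diffeomorphic to $\mathbb{S}^{n-1}$, hence compact, which is what allows me to appeal to integration. I would pursue a Bochner-type argument: starting from the Codazzi-type symmetry $\nabla^h_i \hat{D}_{jkl} = \nabla^h_j \hat{D}_{ikl}$, I contract with $\hat{D}^{jkl}$ and commute covariant derivatives via the Ricci identity, aiming at a divergence identity of the schematic form
\[
\tfrac{1}{2}\Delta_h \|\hat{D}\|^2 = \|\nabla^h \hat{D}\|^2 + Q(\hat{D},R^h) + K(\hat{D},\hat{C}_1,\hat{C}_2),
\]
where $Q$ is the curvature correction from the Ricci identity and $K$ is a cubic remainder. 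Replacing $R^h$ by (\ref{Rh intergral condition c}) and using the algebraic identity in the fourth bullet together with the tracelessness of $\hat{D}$ to reorganize $K$, the goal is the inequality $0 = \int_M \tfrac{1}{2}\Delta_h\|\hat{D}\|^2 \geq \int_M \|\nabla^h\hat{D}\|^2 + \int_M P(\hat{D})$ with $P(\hat{D})\geq 0$, which forces $\hat{D}\equiv 0$. Lemma 2 then delivers the desired linear equivalence $x_2 = L\circ x_1$.

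The main obstacle is producing this divergence identity with the correct sign, i.e.\ showing that the cubic self-interaction of $\hat{D}$ against $\hat{C}_1 + \hat{C}_2$ combines with the curvature term into a non-negative quadratic form in $\hat{D}$. This is precisely where the global hyperovaloid hypothesis (channeled through Lemma 5) and the assumption $n\geq 3$ enter in an essential way; dimension two is known to fail and is handled by a different, scalar PDE argument. This algebraic rearrangement is the technical core of Schneider's rigidity proof, and I would expect it to be the step requiring the most bookkeeping.
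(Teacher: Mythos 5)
First, a point of comparison: the paper does not prove this lemma at all --- it is imported verbatim from Schneider \cite{Schneider1} (Satz 4.2) as an input to Theorem 1, so your attempt is in effect a blind reconstruction of Schneider's argument rather than of anything in the text. Your framework is sound as far as it goes. The necessity direction is correct: $Y_2=(L^{*})^{-1}Y_1$ transports the normalization, the structure equations (\ref{Weingarten eq})--(\ref{Gauss eq for Y}) are $L$-equivariant, hence $h_1=h_2$, $\nabla_1=\nabla_2$, $\nabla_1^*=\nabla_2^*$, and so $\hat{C}_1=\hat{C}_2$ and $\hat{T}_1=\hat{T}_2$. For sufficiency, your four bullet points are all correct consequences of the hypotheses (since $h_1=h_2$ gives a common Levi-Civita connection, subtracting the two copies of (\ref{nablah C intergral condition c}) and (\ref{Rh intergral condition c}) is legitimate), compactness is genuinely available, and the reduction to $\hat{D}\equiv 0$ followed by the local uniqueness lemma (\cite{SSV}, 6.3.3; the paper's Lemma 2) on $M\cong\mathbb{S}^{n-1}$ (simply connected since $n\geq 3$) would indeed finish. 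One small slip: the sphere diffeomorphism is Lemma 4 (\cite{NS}, Prop.\ 7.3), not ``Lemma 5'' --- Lemma 5 is the statement you are proving. You might also note that contracting your quadratic Gauss identity twice, using $\hat{T}_1=\hat{T}_2$, already gives $\|\hat{C}_1\|=\|\hat{C}_2\|$ pointwise, i.e.\ $\langle\hat{D},\hat{C}_1+\hat{C}_2\rangle_h=0$, exactly as in the contractions (\ref{S equation 2})--(\ref{S equation 3}) that this paper carries out in the proof of Theorem 3.

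The genuine gap is that the entire mathematical content of the theorem sits in the step you defer. The positivity $P(\hat{D})\geq 0$ is announced as a ``goal'' and dismissed as ``bookkeeping,'' but it is neither established nor plausibly routine: after commuting derivatives and substituting (\ref{Rh intergral condition c}) for $R^h$, the Weitzenb\"ock correction splits into a constant-curvature part (which does have a sign on traceless symmetric $3$-tensors) plus terms quadratic in $C_i$ coupled quadratically to $\hat{D}$, and the cubic remainder $K(\hat{D},\hat{C}_1,\hat{C}_2)$ has no a priori sign; the handful of contracted identities you can extract from the quadratic Gauss relation is not obviously enough to dominate it. Tellingly, your sketch never identifies where $n\geq 3$ enters the algebra, even though you correctly observe that it must (the $n=2$ case genuinely fails and is handled by Rund's scalar equation). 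Schneider's actual proof is not a single Bochner identity on $\|\hat{D}\|^2$; as the introduction of this paper remarks, it proceeds through a family of PDE/integral formulas in relative differential geometry, and closing your scheme would amount to redoing that analysis. As written, the proposal is a reasonable strategy whose decisive step --- the sign of the combined curvature-plus-cubic form --- is missing, so it cannot be accepted as a proof of the lemma.
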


\section{Equivalence theorems of Minkowski spaces}

 For convenience, we would like to reestablish the canonical correspondence between the differential geometry of a Minkowski space and the centroaffine differential geometry of its indicatrix. Then we will give a proof of Theorem \ref{theo equivalence} by applying Lemma \ref{global unique c}.

\subsection{Minkowski spaces and their indicatrices}

\

Let $V$ be an $n$ dimensional real vector space with a given orientation.
Let $\{\mathbf{b}_1,\ldots,\mathbf{b}_n\}$ be an oriented basis.
Then the mapping $\phi:V\rightarrow \mathbb{R}^n$ defined by
$$\phi(y)=(y^1,\ldots,y^n),\quad \forall y=y^i\mathbf{b}_i\in V,$$
gives the standard smooth structure of $V$. Let $\{\mathbf{b}^*_1,\ldots,\mathbf{b}^*_n\}$ be the dual basis.
The orientation of $V$ is then the $n$ form $\omega=\mathbf{b}^*_1\wedge\cdots\wedge\mathbf{b}^*_n$, which gives a volume form on $V$.

For any function $\mathbf{f}:V\rightarrow \mathbb{R}$, we will denote $f=\mathbf{f}\circ\phi^{-1}$.
$\mathbf{f}$ is said to be differentiable on $V$, if $f$ is differentiable as a function on $\mathbb{R}^n$.
%These definition is independent to the choices of the basis of $V$.

In the following, we review some basic concepts.

\begin{definition}
Let $V$ be an $n$ dimensional real vector space with the standard smooth structure $(y^i;\phi)$.
Let $\mathbf{F}:V\rightarrow[0,+\infty)$ be a function, such that
\begin{enumerate}
\item[(i)] $\mathbf{F}$ is continuous on $V$, and smooth on $V_0:=V\setminus\{0\}$;
\item[(ii)] $\mathbf{F}(\lambda v)=\lambda \mathbf{F}(v)$, \quad $\forall v\in V,\quad \forall \lambda\in \mathbb{R}^+$;
\item[(iii)] $\mathbf{F}$ is strongly convex, i.e., the symmetric tensor $\bar{g}:=\bar{\nabla}d\left[\frac{1}{2}\mathbf{F}^2\right]$ is positive everywhere.
\end{enumerate}
Then $\mathbf{F}$ is called a Minkowski norm of $V$. $(V,\mathbf{F})$ is called a Minkowski space.

\end{definition}

By definition, $(V_0,\bar{g})$ is a Riemannian manifold related to a Minkowski space $(V,\mathbf{F})$.
The Riemannian geometry of $(V_0,\bar{g})$ is fundamental and has many applications in Finsler geometry.
But the following concept concentrates almost all geometric information of a Minkowski space.
\begin{definition}
Let $(V,\mathbf{F})$ be a Minkowski space of dimension $n$. The set
\begin{align}
\mathbf{I}_{\mathbf{F}}:=\{v\in V|\mathbf{F}(v)=1\}
\end{align}
is called the indicatrix of the Minkowski space $(V,\mathbf{F})$.
\end{definition}
The indicatrix $\mathbf{I_F}$ of a Minkowski space $(V,\mathbf{F})$ has been studied as a submanifold of $(V_0,\bar{g})$.
One refers to \cite{BaoChernShen} for details.
In the following, we will study the centroaffine geometry of $\mathbf{I_F}$.
It will be proved that these two kinds of geometry of $\mathbf{I_F}$ are in fact the same.
But the centroaffine differential geometry are more suitable for the discussion of equivalence problem.

\begin{definition}
Let $(V,\mathbf{F})$ and $(\tilde{V},\tilde{\mathbf{F}})$ be two Minkowski spaces of dimension $n$.
If there exists a homomorphism $L\in {\rm Hom}(\tilde{V},V)$, such that
$$\tilde{\mathbf{F}}=\mathbf{F}\circ L,$$
then $(V,\mathbf{F})$ and $(\tilde{V},\tilde{\mathbf{F}})$ are said be equivalent. We will denote
$(V,\mathbf{F})\sim (\tilde{V},\tilde{\mathbf{F}})$, if $(V,\mathbf{F})$ and $(\tilde{V},\tilde{\mathbf{F}})$ are equivalent.
\end{definition}
It is clear that $``\sim"$ is an equivalent relation on the set of $n$ dimensional Minkowski spaces.

\begin{lem}\label{lemma 2.1}
Let $(V,\mathbf{F})$ and $(\tilde{V},\tilde{\mathbf{F}})$ be two Minkowski spaces of dimension $n$.
Let $\mathbf{I}_{\mathbf{F}}$ and $\mathbf{I}_{\tilde{\mathbf{F}}}$ be their indicatrices respectively.
Then $(V,\mathbf{F})\sim (\tilde{V},\tilde{\mathbf{F}})$ if and only if
$$\mathbf{I}_{\mathbf{F}}=L(\mathbf{I}_{\tilde{\mathbf{F}}}),$$
holds for some $L\in {\rm Hom}(\tilde{V},V)$.
\end{lem}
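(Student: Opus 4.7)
The plan is to exploit the positive $1$-homogeneity of Minkowski norms: each norm is completely reconstructed from its indicatrix by radial extension, so linear equivalence of norms should exactly correspond to linear correspondence of indicatrices. Both implications reduce to bookkeeping with the scaling relation $\mathbf{F}(\lambda v)=\lambda\mathbf{F}(v)$ for $\lambda>0$.

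For the forward direction, I would assume $\tilde{\mathbf{F}}=\mathbf{F}\circ L$ for some $L\in\mathrm{Hom}(\tilde{V},V)$. I first argue that $L$ is necessarily a linear isomorphism: if $L\tilde{v}=0$ for some $\tilde{v}\neq 0$, then $\tilde{\mathbf{F}}(\tilde{v})=\mathbf{F}(L\tilde{v})=\mathbf{F}(0)=0$, contradicting the positivity of the Minkowski norm $\tilde{\mathbf{F}}$ on $\tilde V\setminus\{0\}$; since $\dim\tilde V=\dim V=n$, injectivity gives bijectivity. Then $\tilde{v}\in\tilde{\mathbf{I}}_{\tilde{\mathbf{F}}}\iff \tilde{\mathbf{F}}(\tilde{v})=1\iff \mathbf{F}(L\tilde{v})=1\iff L\tilde{v}\in\mathbf{I}_{\mathbf{F}}$, which, combined with bijectivity of $L$, yields $\mathbf{I}_{\mathbf{F}}=L(\tilde{\mathbf{I}}_{\tilde{\mathbf{F}}})$.

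For the reverse direction, I would assume $\mathbf{I}_{\mathbf{F}}=L(\tilde{\mathbf{I}}_{\tilde{\mathbf{F}}})$. A preliminary observation: since $0\notin \mathbf{I}_{\mathbf{F}}$ but every nonzero vector of $\tilde V$ lies on some positive scalar multiple of a vector in $\tilde{\mathbf{I}}_{\tilde{\mathbf{F}}}$, the map $L$ must send no nonzero element of $\tilde{\mathbf{I}}_{\tilde{\mathbf{F}}}$ to $0$, which by $1$-homogeneity implies $L$ has trivial kernel and is thus an isomorphism. Given nonzero $\tilde{v}\in\tilde V$, set $\tilde{u}=\tilde{v}/\tilde{\mathbf{F}}(\tilde{v})\in\tilde{\mathbf{I}}_{\tilde{\mathbf{F}}}$; then $L\tilde{u}\in\mathbf{I}_{\mathbf{F}}$ so $\mathbf{F}(L\tilde{u})=1$, and by positive homogeneity
\[
\mathbf{F}(L\tilde{v})=\mathbf{F}\bigl(\tilde{\mathbf{F}}(\tilde{v})\,L\tilde{u}\bigr)=\tilde{\mathbf{F}}(\tilde{v})\,\mathbf{F}(L\tilde{u})=\tilde{\mathbf{F}}(\tilde{v}).
\]
The case $\tilde{v}=0$ is trivial, so $\tilde{\mathbf{F}}=\mathbf{F}\circ L$ and hence $(V,\mathbf{F})\sim(\tilde V,\tilde{\mathbf{F}})$.

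There is no real obstacle in this lemma; the only subtlety worth mentioning is the verification that $L$ is forced to be an isomorphism (so that the condition ``$L\in\mathrm{Hom}(\tilde V,V)$'' in the statement is not weaker than ``nondegenerate linear homomorphism'' used elsewhere in the paper). Everything else is a one-line application of positive homogeneity.
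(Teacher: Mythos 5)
Your proof is correct and follows essentially the same route as the paper's: the forward direction checks both inclusions using $\tilde{\mathbf{F}}=\mathbf{F}\circ L$, and the reverse direction rescales an arbitrary nonzero $\tilde v$ onto the indicatrix and applies positive homogeneity. Your explicit verification that $L$ is forced to be an isomorphism is a small but worthwhile addition, since the paper's proof uses $L^{-1}$ without comment even though its Definition of equivalence only asks for $L\in\mathrm{Hom}(\tilde V,V)$.
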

\begin{proof}
We first assume that $(V,\mathbf{F})\sim (\tilde{V},\tilde{\mathbf{F}})$.
So $\tilde{\mathbf{F}}=\mathbf{F}\circ L$ holds for some $L\in {\rm Hom}(\tilde{V},V)$.
For any $\tilde{v}\in \mathbf{I}_{\tilde{\mathbf{F}}}$, we have
\begin{align*}
\mathbf{F}(L(\tilde{v}))=(\mathbf{F}\circ L)(\tilde{v})=\tilde{\mathbf{F}}(\tilde{v})=1.
\end{align*}
So $L(\tilde{v})\in \mathbf{I}_{\mathbf{F}}$.

For any $v\in \mathbf{I}_{\mathbf{F}}$, one chooses that $\tilde{v}=L^{-1}(v)$.
Since
\begin{align*}
\tilde{\mathbf{F}}(\tilde{v})=(\mathbf{F}\circ L)(L^{-1}(v))=\mathbf{F}(v)=1,
\end{align*}
then $\tilde{v}\in\tilde{\mathbf{F}}(\tilde{v})$ and $v=L(\tilde{v})$. So we have proved that $\mathbf{I}_{\mathbf{F}}=L(\mathbf{I}_{\tilde{\mathbf{F}}})$.

Now, we assume that $\mathbf{I}_{\mathbf{F}}=L(\mathbf{I}_{\tilde{\mathbf{F}}})$ for some $L\in {\rm Hom}(\tilde{V},V)$.
For any $\tilde{v}\in\tilde{V_0}$, it is clear that ${\lambda}^{-1}\tilde{v}\in \mathbf{I}_{\tilde{\mathbf{F}}}$, where $\lambda=\tilde{\mathbf{F}}(\tilde{v})$.
Since $L\left({\lambda}^{-1}\tilde{v}\right)\in \mathbf{I}_{\mathbf{F}}$, we have
\begin{align*}
(\mathbf{F}\circ L)(\tilde{v})=\mathbf{F}(L(\tilde{v}))=\lambda\mathbf{F}\left(L\left({\lambda}^{-1}\tilde{v}\right)\right)=\lambda.
\end{align*}
It implies that $\tilde{\mathbf{F}}=\mathbf{F}\circ L$ for $L\in {\rm Hom}(\tilde{V},V)$.
\end{proof}

Before the study of the centroaffine differential geometry of the indicatrices,
We would like to review the definitions of three basic tensor fields on $(V_0,\bar{g})$.

\begin{definition}
Let $(V,\mathbf{F})$ be a Minkowski space of dimension $n$. We define a conformal metric of $\bar{g}$ as
\begin{align*}
\hat{g}=\frac{1}{\mathbf{F}^{2}}\bar{g}.
\end{align*}
The Cartan tensor is defined by
\begin{align*}
\hat{A}=\frac{1}{2\mathbf{F}^{2}}\bar{\nabla}\bar{g}.
\end{align*}
Let $|\mathcal{G}|=\left|\frac{\omega(\bar{g})}{\omega}\right|$ be the Radon-Nikodym derivative of the Riemannian measure of $\bar{g}$ with respect to the measure $\omega=\mathbf{b}^*_1\wedge\cdots\wedge\mathbf{b}^*_n$ induced by the orientation of $V$.
The Cartan form is defined by
\begin{align*}
\eta=d\log|\mathcal{G}|.
\end{align*}
\end{definition}
\begin{remark}
One notes that the tensor fields $\hat{g}$, $\hat{A}$ and $\eta$ are all positive homogeneous of degree $0$.
And $i_{\mathbf{x}}\hat{A}=0$ and  $i_{\mathbf{x}}\eta=0$, where $\mathbf{x}$ is the position vector field on $V$.
\end{remark}

\begin{lem}\label{lemma 2.2}
The tensors $\hat{g}$, $\hat{A}$ and $\eta$ are relative invariants with respect to the equivalence relation of Minkowski spaces.
\end{lem}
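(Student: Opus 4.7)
The plan is to verify the three assertions one by one, using only the two facts that $L$ is linear (hence $L^{*}\bar{\nabla}=\tilde{\bar{\nabla}}$, because the flat affine connection $\bar{\nabla}$ is just the componentwise directional derivative) and that $\tilde{\mathbf{F}}=\mathbf{F}\circ L$. By the definition of ``relative invariant'' suggested by Lemma~\ref{lemma 2.1}, I interpret the statement as saying $L^{*}\hat{g}=\tilde{\hat{g}}$, $L^{*}\hat{A}=\tilde{\hat{A}}$ and $L^{*}\eta=\tilde{\eta}$ whenever $(V,\mathbf{F})\sim(\tilde{V},\tilde{\mathbf{F}})$.

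First I would handle $\bar{g}$. Since $d$ commutes with pull-back and $\bar{\nabla}$ is preserved by linear maps, $L^{*}\bar{g}=L^{*}\bar{\nabla}d\bigl[\tfrac{1}{2}\mathbf{F}^{2}\bigr]=\tilde{\bar{\nabla}}d\bigl[\tfrac{1}{2}\tilde{\mathbf{F}}^{2}\bigr]=\tilde{\bar{g}}$. Dividing both sides by $\tilde{\mathbf{F}}^{2}=(\mathbf{F}\circ L)^{2}$ gives $L^{*}\hat{g}=\tilde{\hat{g}}$. For $\hat{A}$ the argument is the same one more covariant derivative deep: $L^{*}\bar{\nabla}\bar{g}=\tilde{\bar{\nabla}}\tilde{\bar{g}}$, divide by $2\tilde{\mathbf{F}}^{2}$.

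The Cartan-form piece is the only non-routine step and will be the main obstacle, because $\eta$ depends on the orientation form $\omega$, which is not canonical and is not literally preserved by $L$. I would fix oriented bases $\{\mathbf{b}_{i}\}$ of $V$ and $\{\tilde{\mathbf{b}}_{i}\}$ of $\tilde{V}$, let $A$ be the matrix of $L$ in these bases, and compute pull-backs of $n$-forms. Then $L^{*}\omega=(\det A)\,\tilde{\omega}$ (a nonzero constant), while the Riemannian volume of $L^{*}\bar{g}=\tilde{\bar{g}}$ satisfies $\omega(\tilde{\bar{g}})=|\det A|\,L^{*}\omega(\bar{g})$, again off by a nonzero constant. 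Taking the ratio,
\begin{equation*}
|\tilde{\mathcal{G}}|=\left|\frac{\omega(\tilde{\bar{g}})}{\tilde{\omega}}\right|=|\det A|\cdot\bigl(|\mathcal{G}|\circ L\bigr)\cdot\frac{1}{|\det A|}\cdot|\det A|=|\det A|\cdot\bigl(|\mathcal{G}|\circ L\bigr),
\end{equation*}
so $|\tilde{\mathcal{G}}|$ and $|\mathcal{G}|\circ L$ differ by a positive multiplicative constant. Taking $d\log$ kills this constant and yields $\tilde{\eta}=d\log|\tilde{\mathcal{G}}|=L^{*}d\log|\mathcal{G}|=L^{*}\eta$.

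The clean way to present this is to note that both $\omega$ and $\omega(\bar{g})$ are top-degree forms, so under a linear isomorphism they pull back with a common constant Jacobian factor that cancels in the $d\log$ of their modulus ratio; this encapsulates the fact that, although $\eta$ is not a pull-back of something canonical, the ambiguity is by a \emph{constant}, and $d\log$ of a constant is zero. Once these three identities are established, Lemma~\ref{lemma 2.1} plus the obvious fact that scalar multiples and covariant derivatives commute with pull-backs under linear isomorphisms yields the claim.
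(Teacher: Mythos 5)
Your proof is correct, and its route differs from the paper's in a way worth noting. The paper proves the lemma entirely in adapted coordinates: it picks the basis $\tilde{\mathbf{b}}_i=L^{-1}(\mathbf{b}_i)$ of $\tilde{V}$, so that $\tilde{\phi}=\phi\circ L$ and hence $\tilde{F}=F$ as functions on $\mathbb{R}^n$; then $\tilde{\bar{g}}=L^*\bar{g}$ drops out of the coordinate formula for $\bar{g}$, and the identities for $\hat{A}$ and $\eta$ are dispatched with ``by the same way.'' That argument silently relies on the invariants being independent of the chosen oriented basis --- for $\eta$ this is precisely your observation that $\omega$ changes by a nonzero constant under a change of basis and that $d\log|\cdot|$ annihilates constants (and, if $\det L<0$, the adapted basis even reverses orientation, a point the paper leaves implicit but which the absolute value absorbs). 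Your version makes this cancellation explicit, and handles $\hat{g}$ and $\hat{A}$ coordinate-freely via naturality of $d$ and of the flat connection under linear maps, so it is if anything more complete than the paper on the $\eta$ step, which is the only delicate one. One bookkeeping slip: since $\tilde{\bar{g}}=L^*\bar{g}$, the Riemannian volume forms satisfy $\omega(\tilde{\bar{g}})=\mathrm{sgn}(\det A)\,L^*\omega(\bar{g})$, not $\omega(\tilde{\bar{g}})=|\det A|\,L^*\omega(\bar{g})$; with your intermediate values the constants in your displayed chain would actually multiply out to $|\det A|^2\,(|\mathcal{G}|\circ L)$, whereas the correct intermediate gives your stated conclusion $|\tilde{\mathcal{G}}|=|\det A|\,(|\mathcal{G}|\circ L)$. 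The slip is harmless: all discrepancies are nonzero constants, which is the only input your $d\log$ argument needs, so $\tilde{\eta}=L^*\eta$ stands and the lemma is proved.
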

\begin{proof}
%We will give an indirect proof of this lemma.
%Let $(V,\mathbf{F})$ and $(\tilde{V},\tilde{\mathbf{F}})$ be two Minkowski spaces of dimension $n$.
%Let $\tilde{\mathbf{F}}=\mathbf{F}\circ L$ for some $L\in {\rm Hom}(\tilde{V},V)$. Let $\{\tilde{\mathbf{b}}_1,\ldots,\tilde{\mathbf{b}}_n\}$
%be a basis of $\tilde{V}$. Then $\mathbf{b}_i=L(\tilde{\mathbf{b}}_i)$, $i=1,\ldots,n$, give a basis of $V$.
%Let $\tilde{\phi}$ and $\phi$ be the induced coordinate maps from $\tilde{V}$ and $V$ to $\mathbb{R}^n$, respectively. Then we have $\tilde{\phi}=\phi\circ L$.
Let  $(V,\mathbf{F})$ be a Minkowski space. Let $\{\mathbf{b}_1,\ldots,\mathbf{b}_n\}$ be an oriented basis of $V$.
Let $\phi$ be the induced coordinate map from $V$ to $\mathbb{R}^n$. Then we have $d\mathbf{F}=\phi^*dF,$
and
\begin{align*}
\bar{g}=\phi^*((F_{y^i}F_{y^j}+FF_{y^iy^j})dy^i\otimes dy^j),
\end{align*}
where $F=\mathbf{F}\circ\phi^{-1}$.

Let $(\tilde{V},\tilde{\mathbf{F}})$ be a Minkowski space which is equivalent to $(V,\mathbf{F})$.
Then there exists $L\in {\rm Hom}(\tilde{V},V)$ such that $\tilde{\mathbf{F}}=\mathbf{F}\circ L$.
Let $\tilde{\mathbf{b}}_i=L^{-1}(\mathbf{b}_i)$, $i=1,\ldots,n$.
Let $\tilde{\phi}$ be the induced coordinate map from $\tilde{V}$ to $\mathbb{R}^n$. Then we have $\tilde{\phi}=\phi\circ L$.
It follows that $\tilde{F}=\tilde{\mathbf{F}}\circ\tilde{\phi}^{-1}=(\mathbf{F}\circ L)\circ(\phi\circ L)^{-1}=F$. So we obtain
\begin{align*}
\tilde{\bar{g}}&=\tilde{\phi}^*((\tilde{F}_{y^i}\tilde{F}_{y^j}+\tilde{F}\tilde{F}_{y^iy^j})dy^i\otimes dy^j)\\
&=L^{*}\circ\phi^*((F_{y^i}F_{y^j}+FF_{y^iy^j})dy^i\otimes dy^j)\\
&=L^{*}\bar{g}.
\end{align*}
By the same way, we can prove that $\tilde{\hat{A}}=L^{*}\hat{A}$ and $\tilde{\eta}=L^{*}\eta$.

\end{proof}

We are going to discuss the centroaffine differential geometry of the indicatrix of a given Minkowski space.

\begin{lem}\label{lemma 2.3}
Let $\mathbf{I}_\mathbf{F}$ be the indicatrix of a Minkowski space $(V,\mathbf{F})$.
Then the identity map $i:\mathbf{I}_\mathbf{F}\rightarrow V$ is an imbedding.
For each point $v\in \mathbf{I}_\mathbf{F}$, if we choose that $y=-v\in T_vV$, $Y=-d\mathbf{F}\in T^*_vV$, then the pair $\{Y,y\}$ gives the centroaffine normalization of $\mathbf{I}_\mathbf{F}$.

\end{lem}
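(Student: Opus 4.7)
The plan is to verify the four defining properties in sequence: (i) $\mathbf{I}_\mathbf{F}$ is a smooth imbedded hypersurface, (ii) $Y=-d\mathbf{F}$ annihilates the tangent bundle of $\mathbf{I}_\mathbf{F}$ and is nonzero, (iii) the normalization $\langle Y,y\rangle=1$ holds, and (iv) the hypersurface is nondegenerate in the sense ${\rm rank}(dY,Y)=n$. Throughout, Euler's identity for the positively homogeneous function $\mathbf{F}$, namely $y^i F_{y^i}=F$ and $y^j F_{y^i y^j}=0$, will be the main computational tool, alongside the strong convexity hypothesis.

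First I would show that $\mathbf{I}_\mathbf{F}$ is a smooth submanifold by applying the regular value theorem: since $F_{y^i}(v)v^i=F(v)=1$ on $\mathbf{I}_\mathbf{F}$, the differential $d\mathbf{F}$ never vanishes on $V_0$, so $1$ is a regular value of $\mathbf{F}|_{V_0}$. Strong convexity together with positive homogeneity makes the unit ball $\{\mathbf{F}\le 1\}$ a compact convex body containing the origin in its interior, hence $\mathbf{I}_\mathbf{F}$ is compact; a smooth injective immersion from a compact manifold is automatically an imbedding.

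Next I would verify the centroaffine axioms in a direct calculation. For any $w\in T_v\mathbf{I}_\mathbf{F}$, the function $\mathbf{F}$ is constant along curves tangent to $w$, so $\langle Y,di(w)\rangle=-d\mathbf{F}(w)=0$; this shows $Y$ is a (nowhere vanishing) section of the conormal line bundle $C\mathbf{I}_\mathbf{F}$. The normalization condition follows immediately from Euler's identity: $\langle Y,y\rangle=\langle -d\mathbf{F},-v\rangle=F_{y^i}(v)v^i=F(v)=1$.

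The main technical point, and the one requiring strong convexity in an essential way, is the nondegeneracy condition ${\rm rank}(dY,Y)=n$. Writing $Y=-F_{y^i}\mathbf{b}^*_i$ along $\mathbf{I}_\mathbf{F}$, one has $dY(w)=-F_{y^iy^j}w^j\mathbf{b}^*_i$ for $w\in T_v\mathbf{I}_\mathbf{F}$. Assume a dependence relation $dY(w)+a\,Y=0$ with $w\in T_v\mathbf{I}_\mathbf{F}$ and $a\in\mathbb{R}$. Contracting the coefficient vector with $v^i$ and using Euler's identities $v^iF_{y^iy^j}=0$ and $v^iF_{y^i}=F=1$ gives $a=0$; then $F_{y^iy^j}w^j=0$, and since the strongly convex Hessian $\bar{g}_{ij}=F_{y^i}F_{y^j}+FF_{y^iy^j}$ is positive definite while $F_{y^i}w^i=0$ on tangent vectors of $\mathbf{I}_\mathbf{F}$, we get $\bar{g}(w,w)=F\cdot F_{y^iy^j}w^iw^j=0$, forcing $w=0$. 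Thus ${\rm rank}(dY,Y)=n$, confirming nondegeneracy. I expect the nondegeneracy step to be the only real obstacle; the others are immediate from Euler's identity and the definition of $\mathbf{I}_\mathbf{F}$.
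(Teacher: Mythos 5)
Your proposal is correct, and for the claims actually stated in the lemma it follows the same route as the paper: the regular value theorem (driven by Euler's identity $d\mathbf{F}(v)=\mathbf{F}(v)$, which is exactly the paper's submersion computation $\mathbf{F}_{*v}(v/F(v))=\partial/\partial t$) gives the imbedding, differentiating the constraint $\mathbf{F}\circ\gamma\equiv1$ gives conormality of $-d\mathbf{F}$, and Euler's theorem gives $\langle Y,y\rangle=1$. The one genuine divergence is your step (iv): the paper's proof of this lemma does not verify ${\rm rank}(dY,Y)=n$ at all, deferring nondegeneracy to the next lemma, where the Gauss equation is used to compute $h=i^*\hat{g}=i^*\mathbf{h}$ and positive definiteness of $h$ is read off from strong convexity. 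Your inline argument --- contracting $dY(w)+aY=0$ with $v^i$ to kill $a$ via $v^iF_{y^iy^j}=0$, then using $\bar{g}(w,w)=F\,F_{y^iy^j}w^iw^j$ on conormal-annihilated $w$ to force $w=0$ --- is a correct, self-contained linear-algebra version of the same fact, and it buys a logically cleaner lemma (the structure equations \eqref{Gauss eq for x}--\eqref{Gauss eq for Y} require nondegeneracy before they can even be written down), at the cost of duplicating content the paper extracts later as the statement that $\mathbf{I_F}$ is a hyperovaloid. One small economy you could make: once $1$ is known to be a regular value, $\mathbf{I}_\mathbf{F}=\mathbf{F}^{-1}(1)$ is already an embedded submanifold of $V_0$, so your compactness argument (compact convex unit ball plus injective immersion) is redundant --- though if you do keep it, note that boundedness of $\{\mathbf{F}\le1\}$ itself needs the standard observation that strong convexity forces $\mathbf{F}>0$ on $V_0$, hence $\mathbf{F}\geq c\,\|\cdot\|$ by homogeneity and compactness of the Euclidean sphere.
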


\begin{proof}
Since the Minkowski norm $\mathbf{F}:V_0\rightarrow \mathbb{R}^+$ is a smooth mapping, and
\begin{align*}
\mathbf{F}(\lambda v)=\lambda \mathbf{F}(v),\quad \forall~ \lambda>0,
\end{align*}
then
\begin{align*}
\mathbf{F}_{*v}\left(\frac{v}{F(v)}\right)=\frac{\partial}{\partial
t}\in T_{\mathbf{F}(v)}\mathbb{R}^+,\quad v\in T_vV.
\end{align*}
So $\mathbf{F}$ is a submersion. By the inverse image of a regular value theorem, $\mathbf{I}_\mathbf{F}=\mathbf{F}^{-1}(1)$ is an imbedding submanifold of $V_0$.

For each smooth curve $\gamma:(-\epsilon,\epsilon)\rightarrow \mathbf{I}_\mathbf{F}$ such that $\gamma(0)=v\in\mathbf{I_F}$, it is clear $\mathbf{F}(\gamma(t))=1$.
Then
\begin{align*}
0=\left.\frac{d}{dt}\right|_{t=0}\mathbf{F}(\gamma(t))=\langle d\mathbf{F},\dot{\gamma}(0)\rangle,
\end{align*}
and $-d\mathbf{F}$ is a conormal field on $\mathbf{I}_\mathbf{F}$.

By the Euler theorem for homogeneous function,
\begin{align*}
\langle -d\mathbf{F}, -v\rangle|_{v}=\left.\left\langle F_{y_i}dy^i,y^j\frac{\partial}{\partial y^j}\right\rangle\right|_{\phi(v)}=F_{y_i}y^i|_{\phi(v)}=1,
\end{align*}
where $F=\mathbf{F}\circ\phi^{-1}$.
So $\{Y=-d\mathbf{F},y=-v\}$ is the centroaffine normalization.
\end{proof}

Let $\{Y,y\}$ be the centroaffine normalization of $\mathbf{I}_\mathbf{F}$ determined in Lemma \ref{lemma 2.3}.
Then we have the induced affine metric $h$ on $\mathbf{I}_\mathbf{F}$ by the Gauss equation (\ref{Gauss eq for x}).
\begin{lem}\label{lemma 2.4}
The induced Riemannian metric $h$ is given by
\begin{align}
h=i^*\hat{g}=i^*\mathbf{h},
\end{align}
where $i:\mathbf{I_F}\rightarrow V$ is the identity map, and $\mathbf{h}:=\mathbf{F}\bar{\nabla}d\mathbf{F}$ is the angular metric.
As a consequence, $h$ is positive definite and $\mathbf{I}_\mathbf{F}$ is a hyperovaloid.
\end{lem}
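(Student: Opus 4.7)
The strategy is to unwind both equalities from the Gauss equation~(\ref{Gauss eq for x}) applied to the inclusion $i:\mathbf{I_F}\hookrightarrow V$, using the centroaffine normalization $Y=-d\mathbf{F}$, $y=-v$ identified in Lemma~\ref{lemma 2.3}. First I would isolate $h$: for $v,w\in T_{v_0}\mathbf{I_F}$, pair the Gauss equation with $Y$ and use the conormality of $Y$ (which kills the tangential $dx(\nabla_v w)$ term) together with $\langle Y,y\rangle=1$ to obtain
\begin{equation*}
h(v,w)=\langle\bar{\nabla}_v dx(w),Y\rangle=-\langle\bar{\nabla}_v w,d\mathbf{F}\rangle.
\end{equation*}

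Next, to recognize the right-hand side as $\mathbf{h}(v,w)$, I would extend $w$ to a vector field tangent to $\mathbf{I_F}$ along a curve $\gamma$ with $\gamma(0)=v_0$ and $\dot\gamma(0)=v$, so that $w(\mathbf{F})\equiv 0$ along $\gamma$. Differentiating this identity at $t=0$ and applying Leibniz for the flat connection $\bar{\nabla}$ gives $\langle\bar{\nabla}_v w,d\mathbf{F}\rangle+(\bar{\nabla}_v d\mathbf{F})(w)=0$, whence $h(v,w)=(\bar{\nabla}d\mathbf{F})(v,w)=\mathbf{h}(v,w)$, and therefore $h=i^*\mathbf{h}$.

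For the identification $h=i^*\hat{g}$, the key observation is that on $\mathbf{I_F}$ we have $\mathbf{F}\equiv 1$, so $\hat{g}=\mathbf{F}^{-2}\bar{g}$ and $\bar{g}$ restrict to the same form. Expanding $\bar{g}=\bar{\nabla}d[\tfrac{1}{2}\mathbf{F}^2]$ in coordinates yields $\bar{g}_{ij}=FF_{y^iy^j}+F_{y^i}F_{y^j}$; the rank-one piece $F_{y^i}F_{y^j}$ annihilates vectors tangent to $\mathbf{I_F}$ (since these form the kernel of $d\mathbf{F}$, a nonzero functional by Euler's identity), while the first summand restricts to $\mathbf{h}$. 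Hence $i^*\bar{g}=i^*\mathbf{h}=h$. The positive-definiteness of $h$ is then inherited from the strong convexity of $\mathbf{F}$, so the centroaffine normalization is nondegenerate; together with compactness of $\mathbf{I_F}$ (a radial image of $\mathbb{S}^{n-1}$ under $v\mapsto v/\mathbf{F}(v)$), the preceding closed-implies-hyperovaloid lemma yields the final claim.

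The main obstacle is a subtle but necessary point in the Leibniz step: the extension of $w$ must remain tangent to $\mathbf{I_F}$ along $\gamma$, otherwise differentiating $\langle w,d\mathbf{F}\rangle$ would acquire an unwanted contribution from the normal component of $\bar{\nabla}_v w$. Once this tangency is arranged, the rest reduces to bookkeeping with Euler's homogeneity relations for $\mathbf{F}$ and the definitions of the tensors on $V_0$.
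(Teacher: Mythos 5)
Your proof is correct and takes essentially the same approach as the paper: the paper's proof consists of the identical chain of equalities read in the opposite direction, starting from $i^*\hat{g}$, dropping the rank-one piece $\mathbf{F}^{-2}\,d\mathbf{F}\otimes d\mathbf{F}$ on tangent vectors, performing your Leibniz step via $\langle d\mathbf{F},i_*w\rangle\equiv 0$, and landing on $h$ through the Gauss equation with $\langle Y,y\rangle=1$. The tangency point you flag is handled there by computing with vector fields $u,w\in\Gamma(T\mathbf{I_F})$ throughout, and your explicit compactness remark only spells out the appeal to the closed-implies-hyperovaloid lemma that the paper leaves implicit.
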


\begin{proof}
The equation (\ref{Gauss eq for x}) reads
\begin{align*}
\bar{\nabla}_ui_*w=i_*(\nabla_uw)+h(u,w)y,
\end{align*}
where $u,w\in\Gamma(T\mathbf{I_F})$ are smooth vector fields on $\mathbf{I_F}$, and $\nabla$ is the induced affine connection.

Since $\hat{g}=\mathbf{F}^{-2}(d\mathbf{F}\otimes d\mathbf{F}+\mathbf{F}\bar{\nabla}d\mathbf{F})$, we have
\begin{align*}
i^*\hat{g}(u,w)&=\mathbf{F}^{-2}(d\mathbf{F}\otimes d\mathbf{F}+\mathbf{F}\bar{\nabla}d\mathbf{F})(i_*u,i_*w)\\
&=\bar{\nabla}d\mathbf{F}(i_*u,i_*w)\\
&=\langle\bar{\nabla}_ud\mathbf{F},i_*w\rangle\\
&=\langle-d\mathbf{F},\bar{\nabla}_ui_*w\rangle\\
&=h(u,w).
\end{align*}
So $h$ is positive definite everywhere and $\mathbf{I_F}$ is a hyperovaloid.
\end{proof}

\begin{lem}\label{lemma 2.5}
The cubic form $\hat{C}$ is given by
\begin{align}\label{Chat=Ahat}
\hat{C}=-i^*\hat{A},
\end{align}
and the Tchebychev form is
\begin{align}\label{That=eta}
\hat{T}=-\frac{1}{n-1}i^*\eta
\end{align}
\end{lem}

\begin{proof}
Let $u,w,z\in\Gamma(T\mathbf{I}_F)$ be smooth vector fields on $\mathbf{I_F}$.
Since that
\begin{align*}
A&=\frac{1}{2\mathbf{F}^{2}}\bar{\nabla}(d\mathbf{F}\otimes d\mathbf{F}+\mathbf{F}\bar{\nabla}d\mathbf{F})\\
&=\frac{1}{2\mathbf{F}^{2}}(\bar{\nabla}d\mathbf{F}\otimes d\mathbf{F}+2d\mathbf{F}\otimes \bar{\nabla}d\mathbf{F}+\mathbf{F}\bar{\nabla}\bar{\nabla}d\mathbf{F}),
\end{align*}
and
\begin{align*}
\bar{\nabla}d\mathbf{F}(u,y)&=\langle\bar{\nabla}_ud\mathbf{F},y\rangle\\
&=\langle-d\mathbf{F},\bar{\nabla}_uy\rangle\\
&=\langle-d\mathbf{F},-i_*u\rangle\\
&=0.
\end{align*}
By (\ref{Chat}), we have
\begin{align*}
2i^*A(z,u,w)&=\frac{1}{\mathbf{F}^{2}}(\bar{\nabla}d\mathbf{F}\otimes d\mathbf{F}+2d\mathbf{F}\otimes \bar{\nabla}d\mathbf{F}+\mathbf{F}\bar{\nabla}\bar{\nabla}d\mathbf{F})(i_*z,i_*u,i_*w)\\
&=\bar{\nabla}\bar{\nabla}d\mathbf{F}(i_*z,i_*u,i_*w)\\
&=(\bar{\nabla}_z(\bar{\nabla}d\mathbf{F}))(i_*u,i_*w)\\
&=z(\bar{\nabla}d\mathbf{F}(i_*u,i_*w))-\bar{\nabla}d\mathbf{F}(\bar{\nabla}_zi_*u,i_*w)-\bar{\nabla}d\mathbf{F}(i_*u,\bar{\nabla}_zi_*w)\\
&=z(\bar{\nabla}d\mathbf{F}(i_*u,i_*w))-\bar{\nabla}d\mathbf{F}(i_*(\nabla_zu),i_*w)-\bar{\nabla}d\mathbf{F}(i_*u,i_*(\nabla_zu))\\
&=z(i^*(\bar{\nabla}d\mathbf{F})(u,w))-i^*(\bar{\nabla}d\mathbf{F})(\nabla_zu,w)-i^*(\bar{\nabla}d\mathbf{F})(u,\nabla_zw)\\
&=z(h(u,w))-h(\nabla_zu,w)-h(u,\nabla_zw)\\
&=(\nabla_zh)(u,w)\\
&=-2\hat{C}(z,u,w).
\end{align*}
So (\ref{Chat=Ahat}) follows. One similarly has (\ref{That=eta}) from (\ref{That2}).
%Then one can prove that $\hat{A}$ is symmetric. Let $A$ be the $(1,2)$ tensor associated to $\hat{A}$ with respect to the Riemannian metric $\hat{g}$.
%$A$ can be viewed as an element of $\Omega^1(V_0,{\rm End}(TV_0))$. It has been proved that
%\begin{align*}
%\eta={\rm tr}A.
%\end{align*}

\end{proof}

The above lemmas show that how to derive the centroaffine differential geometric structures of the indicatrices of Minkowski spaces from the Minkowski norms.
Conversely, a hyperovaloid $M$ with origin in its interior can define a Minkowski norm $\mathbf{F}$, such that $\mathbf{I_F}=M$.

\begin{lem}\label{hperovaloid to norm}
Let $V$ be an $n$ dimensional vector space. Let $M$ be a hyperovaloid in $V$ with origin in its interior.
Then there is a Minkowski norm $\mathbf{F}$ on $V$ such that $\mathbf{I_F}=M$.
\end{lem}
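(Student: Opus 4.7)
The plan is to define $\mathbf{F}$ as the Minkowski gauge of the convex body bounded by $M$ and then verify the three defining properties of a Minkowski norm in order. By the properties of hyperovaloids recalled above (in particular that $M$ is the boundary of a strongly convex body whose interior contains the origin), every ray $\{ty:t>0\}$ with $y\in V_{0}$ meets $M$ in exactly one point. Set $\mathbf{F}(0)=0$ and, for $y\neq 0$, let $\mathbf{F}(y)$ be the unique positive scalar with $y/\mathbf{F}(y)\in M$. By construction $\mathbf{I}_{\mathbf{F}}=M$, and positive $1$-homogeneity is immediate. Smoothness on $V_{0}$ follows from the implicit function theorem: near any $v\in M$ pick a smooth local defining function $\rho$ for $M$ with $d\rho|_{v}\neq 0$; transversality of the radial line through $v$ to $T_{v}M$ (a consequence of the origin being interior to the body bounded by $M$, so that the tangent plane at $v$ is a supporting hyperplane strictly separating the origin from $v$) yields $\frac{\partial}{\partial t}\rho(y/t)\bigr|_{t=1,\,y=v}\neq 0$, and solving $\rho(y/t)=0$ for $t=\mathbf{F}(y)$ gives the desired smooth extension. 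Continuity at the origin is a consequence of $1$-homogeneity.

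The main obstacle is property (iii), strong convexity of $\mathbf{F}$, i.e., positive definiteness of
\[
\bar{g}\;=\;\bar{\nabla}d\bigl[\tfrac{1}{2}\mathbf{F}^{2}\bigr]\;=\;d\mathbf{F}\otimes d\mathbf{F}+\mathbf{F}\,\bar{\nabla}d\mathbf{F}.
\]
Since $\bar{g}$ is $0$-homogeneous in $y$, it suffices to verify positivity at a single point $v\in M$ on each radial ray. The argument of Lemma \ref{lemma 2.3} carries over verbatim for the $\mathbf{F}$ just constructed and shows that $\{Y=-d\mathbf{F},\,y=-v\}$ realizes the centroaffine normalization of the hyperovaloid $M$; the calculation in Lemma \ref{lemma 2.4}, which only manipulates the Gauss equation (\ref{Gauss eq for x}), then yields
\[
\bar{\nabla}d\mathbf{F}(u,w)\;=\;h_{M}(u,w)\qquad\text{for all }u,w\in T_{v}M,
\]
where $h_{M}$ is the induced centroaffine metric of $M$. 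Because $M$ is a hyperovaloid, $h_{M}$ is positive definite on $T_{v}M$.

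To finish I would decompose $T_{v}V=T_{v}M\oplus\mathbb{R}v$ and read off the blocks of $\bar{g}|_{v}$. Euler's identity gives $d\mathbf{F}(v)=\mathbf{F}(v)=1$, while $d\mathbf{F}|_{T_{v}M}=0$ since $T_{v}M=\ker d\mathbf{F}$; moreover the components $F_{y^{i}}$ of $d\mathbf{F}$ are $0$-homogeneous, so $\bar{\nabla}_{\mathbf{x}}d\mathbf{F}=0$ along the position vector field and hence $\bar{\nabla}d\mathbf{F}(v,\cdot)=0$. Combined with the displayed identity this gives $\bar{g}(v,v)=1$, $\bar{g}(v,u)=0$ for $u\in T_{v}M$, and $\bar{g}(u,u')=h_{M}(u,u')$ for $u,u'\in T_{v}M$, so $\bar{g}|_{v}$ is positive definite. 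Homogeneity then propagates positive definiteness to all of $V_{0}$, completing the proof.
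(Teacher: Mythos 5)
Your proposal is correct and takes essentially the same route as the paper: both define $\mathbf{F}$ as the gauge of the convex body bounded by $M$, obtain smoothness on $V_0$ from an inverse/implicit function theorem argument (the paper inverts the polar map $s(\lambda,v)=\lambda v$, you solve $\rho(y/t)=0$ for $t$), and deduce strong convexity from positive definiteness of the centroaffine metric $h$ of the hyperovaloid --- the paper via the pullback identity $s^*\bar{g}=d\lambda\otimes d\lambda+\lambda h$, you via the equivalent pointwise block decomposition $T_vV=T_vM\oplus\mathbb{R}v$ combined with the $0$-homogeneity of $\bar{g}$. One small wording slip: the tangent hyperplane at $v$ contains $v$, so it cannot strictly separate the origin from $v$; the fact you actually need (and have) is that the origin does not lie on this hyperplane, which is exactly the radial transversality used in your implicit function theorem step.
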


\begin{proof}
Since $M$ is strongly convex, then for each $\tilde{v}\in V_0$, the ray $\{t\tilde{v}|t>0\}$ has a unique intersection point $v$ with $M$.
Let $\tilde{v}=\lambda v$. then we can define a function $\mathbf{F}:V_0\rightarrow (0,+\infty)$ as
\begin{align*}
\mathbf{F}(\tilde{v})=\lambda.
\end{align*}
It is clear that $\mathbf{F}$ is positive homogeneous degree $1$.
Let $(U,\psi)$ be a coordinate chart of $M$ with the coordinate map $\psi:U\rightarrow \mathbb{R}^{n-1}$.
The scalar product $s:\mathbb{R}^{+}\times V\rightarrow V$ is clearly smooth.
Then the restriction of $s$ on $\mathbb{R}^{+}\times U$ is also smooth as $M$ is a smooth imbeded submanifold.
We will denote that $\mathbb{R}^{+}U:=s(\mathbb{R}^{+}\times U)$.
Since the Jacobian of $s$ at $(\lambda, v)$ is $J=\lambda^{n-1}(v,dv)=(-1)^n\lambda^{n-1}(y,dy)$,
where $y=-v$ denotes the centroaffine norm of $M$ at $v$. Then $J$ is nonsingular.
By inverse function theorem, $s:\mathbb{R}^{+}\times U\rightarrow\mathbb{R}^{+}U$ is a diffeomorphism.
Let $\tilde{\phi}=(i\times\psi)\circ s^{-1}$ be a map from $\mathbb{R}^{+}U$ to $\mathbb{R}\times\mathbb{R}^{n-1}=\mathbb{R}^{n}$.
Then $\tilde{\phi}$ gives a coordinate map of $\mathbb{R}^{+}U$. It is clear that
\begin{align*}
\mathbf{F}\circ\tilde{\phi}^{-1}(\lambda,\psi(U))=\lambda,
\end{align*}
then $\mathbf{F}$ is smooth on $V_0$.

Similar to Lemma \ref{lemma 2.3}, $\{-v,-d\mathbf{F}\}$ is the centroaffine normalization of $M$, where $v\in M$.
Moreover, we have
\begin{align*}
s^*\bar{g}=s^*\left(\bar{\nabla}d\left[\frac{1}{2}\mathbf{F}^2\right]\right)=d\lambda\otimes d\lambda+\lambda h,
\end{align*}
where $h$ is the induced Riemannian metric of $M$ with respect to the centroaffine normalization. So we have proved that $\mathbf{F}$ is a Minkowski norm on $V$.
$\mathbf{I_F}=M$ holds by definition.
\end{proof}

\subsection{Proof of Theorem \ref{theo equivalence}}
\

\begin{proof}
The sufficient part follows form Lemma \ref{lemma 2.2}. Now we are going to prove the necessary part.
It is clear that $f$ induces a diffeomorphism between the indicatrices $f:\mathbf{I_F}_1\rightarrow \mathbf{I_F}_2$.
As a consequence of  Lemma \ref{lemma 2.4}, \ref{lemma 2.5} and \ref{global unique c}, $f$ is just a linear homomorphism.
Then the proof is complete by applying Lemma \ref{lemma 2.1}.
\end{proof}

\section{Chern connection and Cartan-type one form}

\subsection{Chern connection in Finsler geometry}
\

Let $M$ be an $n$ dimensional smooth manifold and $\pi:TM\to M$
the tangent bundle of $M$. Let $(U;\phi(x)=(x^1,x^2,\ldots,x^{n}))$ be a
local coordinate system on an open subset $U$ of $M$. Then by the
standard procedure one gets a local coordinate system
$\psi(x,y)=(x^1,\ldots,x^{n},y^1,\ldots,y^{n})$ on $\pi^{-1}(U)$. Set
$TM_0=TM\setminus0$, where $0$ denotes the zero section of $TM$.
Then $\psi(x,y)$ with $y\neq 0$ is a local coordinate system on $TM_0$.

\begin{definition}\label{definition of Finsler manifolds}
A Finsler structure on $M$ is a continue function
$\mathbf{F}:TM\rightarrow\mathbb{R}$, which is smooth on $TM_0$, such that $\mathbf{F}_{T_xM}$ is a Minkowski norm for each $x\in M$. A manifold $M$ with a
Finsler structure $\mathbf{F}$ is called a Finsler manifold, and denoted by
$(M,\mathbf{F})$.

Let $F=\mathbf{F}\circ\psi^{-1}$, then $F$ is a smooth function of $2n$ variables. By the definition of Minkowski spaces, the $n\times n$ matrix
\begin{align*}
(g_{ij}):=\left(\frac{1}{2}[F^2]_{y^iy^j}\right)
\end{align*}
is positive definite everywhere.
\end{definition}

Using the Finsler structure $\mathbf{F}$ of a Finsler manifold $(M,\mathbf{F})$, one
can compute the energy variation of curves on $M$. The following
important data in Finsler geometry appears naturally in this
process:
\begin{align*}
G^{i}=\frac{1}{4}g^{ij}\left(\left[F^2\right]_{y^{j}x^{k}}y^{k}-\left[F^2\right]_{x^{j}}\right),%\label{geodesic coeffi.}
\end{align*}
where $(g^{ij})=(g_{ij})^{-1}$. It is clear that
\begin{align}\label{hom of G}
G^i(x,\lambda y)=\lambda^2G^i(x,y),\quad \lambda>0, ~i=1,\ldots, n.
\end{align}
The spray $\mathbf{G}$ or the Reeb field of the Finsler manifold $(M,\mathbf{F})$ is defined as a special smooth vector field on $TM_0$ as follows
\begin{align*}
\mathbf{G}=y^i\frac{\partial}{\partial x^i}-2G^i\frac{\partial}{\partial y^i}.
\end{align*}
In literatures, $G^i$'s are called the spray coefficients.

Set
%\begin{align*}
%N^{i}_{j}=\frac{1}{F}\frac{\partial G^{i}}{\partial y^j},
%\end{align*}
\begin{align*}
\frac{\delta}{\delta x^i}:=\frac{\partial}{\partial
x^{i}}-\frac{\partial G^{j}}{\partial y^i}\frac{\partial}{\partial y^j},\quad \frac{\delta}{\delta y^i}:=F\frac{\partial}{\partial
y^{i}}.
\end{align*}
Clearly, the vectors
\begin{align}
\left\{\frac{\delta}{\delta x^1},\frac{\delta}{\delta x^2},\ldots,
\frac{\delta}{\delta x^{n}}, \frac{\delta}{\delta
y^1},\frac{\delta}{\delta y^2},\ldots, \frac{\delta}{\delta
y^{n}}\right\}\label{basis of TM_0}
\end{align}
form a local tangent frame of $TM_0$.
%For another local coordinate system $(U;{\tilde x})$ on $M$, a routine computation shows that
%\begin{align}
%{{\delta}\over{\delta x^i}}={{\partial{\tilde x}^j}\over{\partial
%x^i}}{{\delta}\over{\delta{\tilde x}^j}},\quad{{\delta}\over{\delta
%y^i}}={{\partial{\tilde x}^j}\over{\partial
%x^i}}{{\delta}\over{\delta{\tilde y}^j}}. \label{delta x delta y}
%\end{align}
Then $T(TM_0)$ admits a splitting induced from the Finsler structure $\mathbf{F}$,
$$T(TM_0)=H(TM_0)\oplus V(TM_0),$$
where $H(TM_0)={\rm span}\left\{\frac{\delta}{\delta x^1},\ldots,
\frac{\delta}{\delta x^{n}}\right\}$, and $V(TM_0)={\rm span}\left\{\frac{\delta}{\delta y^1},\ldots,
\frac{\delta}{\delta y^{n}}\right\}$.
Then one gets a well-defined linear map $J:T(TM_0)\to T(TM_0)$
\begin{align*}
J\left(\frac{\delta}{\delta x^i}\right)=\frac{\delta}{\delta
y^i},\quad J\left(\frac{\delta}{\delta
y^i}\right)=-\frac{\delta}{\delta x^i},%\label{JJ}
\end{align*}
which is in fact an almost complex structure on $TM_0$. Let
\begin{align*}
\left\{\delta x^1,\delta x^2,\ldots,\delta x^{n},\delta y^1,\delta
y^2,\ldots,\delta y^{n}\right\}%\label{JJJ}
\end{align*}
be the dual frame of (\ref{basis of TM_0}). One has
\begin{align*}
\delta x^i=dx^i,\quad \delta y^i={1\over
F}\left(dy^{i}+\frac{\partial G^{i}}{\partial
y^j}dx^{j}\right),%\label{JiJi}
\end{align*}
and
\begin{align}
J^*(\delta x^i)=-\delta y^i,\quad J^*(\delta y^i)=\delta
x^i,\label{JiJ}
\end{align}
where $J^*$ denotes the dual map of $J$.

Now the fundamental tensor $g=g_{ij}dx^i\otimes dx^j$ defines an Euclidean metric on
the pull back bundle $\pi^*TM$ over $TM_0$. Note
that $\pi^*TM$ admits a distinguished global section
$l:TM_0\to\pi^{\ast}TM$, which is defined by
\begin{align*}
l(x,y)=\left(x,y,\frac{y}{F(x,y)}\right).%\label{l1}
\end{align*}

For any local orthonormal frame field $\left\{e_1,\ldots,e_{n}\right\}$ of
$(\pi^*TM,g)$ with $e_{n}=l$, let $\{\omega^{1},\cdots,\omega^{n}\}$
be the dual frame. Clearly, $\omega^i$'s can be viewed
naturally as (local) one forms on $TM_0$. Here $\omega^{n}$,
the so called Hilbert form, is a globally defined one form and $\omega^{n}=F_{y^i}\delta x^i$. Set
\begin{align*}
\omega^{n+i}=J^*(\omega^i),\quad i=1,2,\ldots, n,%\label{200}
\end{align*}
and
\begin{align}
\theta=\left\{\omega^1,\ldots,\omega^n,\omega^{n+1},\ldots,\omega^{2n}\right\},
                                                         \label{total cobasis of SM}
\end{align}
where $\omega^{2n}=-F_{y^i}\delta y^i=-d\log F$.
$\theta$ forms a local coframe of $TM_0$. The tensor
\begin{align*}
g^{T(TM_0)}=\sum_{i=1}^{n}\omega^i\otimes\omega^i+\sum_{i=1}^{n}\omega^{n+i}\otimes\omega^{n+i}
\end{align*}
gives raise a Riemannian metric on $TM_0$.
Let
$\{\mathbf{e}_1,\ldots,\mathbf{e}_n,\mathbf{e}_{n+1},\ldots,\mathbf{e}_{2n}\}$
denote the dual frame of $\theta$. It is clear that
\begin{align}\label{mathbf e}
H(TM_0)={\rm span}\{\mathbf{e}_1,\ldots,\mathbf{e}_n\}.
\end{align}

Write that
\begin{align*}
\omega^j=v^{j}_i\delta x^i,\quad{\rm and\quad so}\quad\omega^{n+j}=J^*(v^{j}_i\delta
x^i)=-v^{j}_i\delta y^i.%\label{omega to dx}
\end{align*}
Then one has
\begin{align}
\mathbf{e}_i=u_{i}^j\frac{\delta}{\delta x^j}\quad{\rm and}\quad\mathbf{e}_{n+i}
=-u_{i}^j\frac{\delta}{\delta y^j},\label{ei to delta delta x}
\end{align}
where $(u_{j}^i)=(v^{j}_i)^{-1}$. One also notes that $v^{n}_i=F_{y^i}$
and $u^i_n=\frac{y^i}{F}$.

Let $\nabla^{\rm Ch}$ be the Chern connection, i.e.,
\begin{align*}
\nabla^{\rm Ch}:\Omega^*(TM_0;H(TM_0))\rightarrow \Omega^{*+1}(TM_0;H(TM_0)).
\end{align*}
It is proved in \cite{FL} that the symmetrization of Chern connection is just the Cartan connection $\nabla^{\rm Ca}$.
The difference between $\nabla^{\rm Ca}$ and $\nabla^{\rm Ch}$ will be referred as the Cartan endomorphism,
\begin{align*}
H=\nabla^{\rm Ca}-\nabla^{\rm Ch}\quad\in\Omega^1(TM_0,{\rm
End}(H(TM_0))).
\end{align*}
Set $H=H_{ij}\omega^j\otimes\mathbf{e}_i$. By Lemma 3 and Lemma 4 in \cite{FL}, $H_{ij}=H_{ji}=H_{ij\gamma}\omega^{n+\gamma}$ is locally determined with respect to (\ref{ei to delta delta x}) by
\begin{align*}
H_{ij\gamma}=-A_{pqk}u_i^pu_j^qu_{\gamma}^k,%  \label{H under natrual basis}
\end{align*}
where $A_{ijk}=\frac{1}{4}F[F^2]_{y^iy^jy^k}$.

Let $\bm{\omega}=(\omega_j^i)$ be the connection matrix of the Chern
connection with respect to the local orthonormal frame field (\ref{mathbf e}),
i.e.,
\begin{align*}
\D{\rm Ch}\mathbf{e}_i=\omega_i^j\mathbf{e}_j.%\label{400}
\end{align*}
\begin{lem}[\cite{BaoChernShen,ChernShen,Mo}]\label{sturcture eq}
The connection matrix $\bm{\omega}=(\omega_j^i)$ of $\nabla^{\rm Ch}$ is determined by the following structure equations,
\begin{equation}\left\{
\begin{aligned}
&d\vartheta=-\bm{\omega}\wedge\vartheta,\\
&\bm{\omega}+\bm{\omega}^t=-2H,
\end{aligned}\right.\label{Chern connection structure eq. matrix}
\end{equation}
where $\vartheta=(\omega^1,\ldots,\omega^{n})^t$. Furthermore,
$$\omega_{\alpha}^{n}=-\omega^{\alpha}_{n}=\omega^{n+\alpha},\quad{\rm and}\quad \omega^{n}_{n}=0.$$
\end{lem}

\begin{remark}
The first equation in (\ref{Chern connection structure eq. matrix}) is described as torsion freeness of Chern connection.
And the second equation in (\ref{Chern connection structure eq. matrix}) implies that Chern connection is almost preserving metric.
This connection is constructed originally by Chern in the study of local equivalence problem in Finsler spaces \cite{Chern}.
In \cite{FL}, we proved that Chern connection is just the Bott connection on $H(TM_0)$ by the theory of foliation (\cite{Zhang}).
\end{remark}

Let $R^{\rm Ch}=\left(\nabla^{\rm Ch}\right)^2$  be the curvature of $\nabla^{\rm Ch}$.
Let $\Omega=\left(\Omega_j^i\right)$ be the curvature forms of $R^{\rm Ch}$. Then
\begin{align*}
\Omega_j^i=d\omega_j^i-\omega_j^k\wedge\omega_k^i.
\end{align*}
From Lemma \ref{sturcture eq},
\begin{align*}
\Omega_j^i=\frac{1}{2}R_{j~kl}^{~i}\omega^k\wedge\omega^l+P_{j~k\gamma}^{~i}\omega^k\wedge\omega^{n+\gamma},
\end{align*}
where $R_{j~kl}^{~i}=-R_{j~lk}^{~i}.$
The the ``h-h'' part
\begin{align*}
R:=\frac{1}{2}R_{j~kl}^{~i}(\omega^k\wedge\omega^l)\otimes\omega^j\otimes\mathbf{e}_i
\end{align*}
of $R^{\rm Ch}$ will be referred as the Chern-Riemann curvature.
The ``h-v'' part
\begin{align*}
P:=P_{j~k\gamma}^{~i}(\omega^k\wedge\omega^{n+\gamma})\otimes\omega^j\otimes\mathbf{e}_i
\end{align*}
will be called the Chern-Minkowski curvature.
The Landsberg curvature is defined as
$$L:=P_{n~k\gamma}^{~i}(\omega^k\wedge\omega^{n+\gamma})\otimes\mathbf{e}_i,$$
the mean Landsberg curvature is defined by $\mathbf{J}={\rm tr}L$.
If a Finsler manifold satisfies $P=0$, $L=0$ or $\mathbf{J}=0$, then it is called a Berwald, Landsberg or weak Landsberg manifold, respectively.

\begin{lem}[\cite{BaoChernShen,ChernShen,Mo}]\label{bianchi 1}
One has the following Bianchi identities induced from the torsion free property of Chern connection.
\begin{equation*}
R_{j~kl}^{~i}+R_{k~lj}^{~i}+R_{l~jk}^{~i}=0,
\end{equation*}
\begin{equation*}
P_{j~k\gamma}^{~i}=P_{k~j\gamma}^{~i}. \label{P 13 symmetry}
\end{equation*}
%and $$Q_{j~\alpha\beta}^{~i}=Q_{j~\beta\alpha}^{~i}=0.$$
\end{lem}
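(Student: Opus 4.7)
The plan is to derive all three Bianchi identities simultaneously from a single source: the first Bianchi-type identity obtained by differentiating the torsion-free structure equation $d\vartheta = \vartheta\wedge\boldsymbol{\omega}$. Since $\nabla^{\rm BC}$ has vanishing torsion, the standard identity $\omega^j\wedge\Omega_j^{~i}=0$ should emerge, and then decomposing this 3-form according to the horizontal/vertical types of the coframe will peel off the three asserted identities one by one.

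The first step is to apply $d$ to $d\omega^i=\omega^j\wedge\omega_j^{~i}$ and use $d^2=0$. This gives $0=d\omega^j\wedge\omega_j^{~i}-\omega^j\wedge d\omega_j^{~i}$. Substituting $d\omega^j=\omega^k\wedge\omega_k^{~j}$ into the first term and adding and subtracting $\omega^j\wedge\omega_j^{~k}\wedge\omega_k^{~i}$ to the second term so as to introduce the curvature $\Omega_j^{~i}=d\omega_j^{~i}-\omega_j^{~k}\wedge\omega_k^{~i}$, I obtain the clean identity
\begin{equation*}
\omega^j\wedge\Omega_j^{~i}=0.
\end{equation*}
(The two non-curvature terms cancel by the symmetry $\omega^k\wedge\omega_k^{~j}\wedge\omega_j^{~i}=\omega^j\wedge\omega_j^{~k}\wedge\omega_k^{~i}$ after relabeling.)

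Next I expand $\Omega_j^{~i}$ using the given decomposition into h-h, h-v and v-v parts and wedge with $\omega^j$. This produces a 3-form decomposed into three pieces of distinct type: a pure horizontal part $\frac{1}{2}R_{j~kl}^{~i}\,\omega^j\wedge\omega^k\wedge\omega^l$, a mixed part $P_{j~k\gamma}^{~i}\,\omega^j\wedge\omega^k\wedge\omega^{n+\gamma}$, and a part with two vertical factors $\frac{1}{2}Q_{j~\alpha\beta}^{~i}\,\omega^j\wedge\omega^{n+\alpha}\wedge\omega^{n+\beta}$. Because the basis 3-forms of these three types are linearly independent in $\Omega^3(SM)$, each piece must vanish separately. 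The pure horizontal piece forces the cyclic sum $R_{j~kl}^{~i}+R_{k~lj}^{~i}+R_{l~jk}^{~i}=0$ after antisymmetrizing in $(j,k,l)$; the mixed piece forces $P_{j~k\gamma}^{~i}=P_{k~j\gamma}^{~i}$ by symmetrizing in $(j,k)$; and the last piece, combined with the built-in antisymmetry $Q_{j~\alpha\beta}^{~i}=-Q_{j~\beta\alpha}^{~i}$, forces $Q_{j~\alpha\beta}^{~i}=0$, which in turn trivially yields $Q_{j~\alpha\beta}^{~i}=Q_{j~\beta\alpha}^{~i}$.

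I do not expect a genuine obstacle here; the proof is essentially a formal consequence of torsion-freeness together with the fact that $SM$ is $(2n-1)$-dimensional so that the coframe $\{\omega^1,\ldots,\omega^n,\omega^{n+1},\ldots,\omega^{2n-1}\}$ is a full basis of $T^*(SM)$ and the triple wedges above are genuinely linearly independent. The only minor bookkeeping care is to properly antisymmetrize each coefficient to extract the stated identities (using the given antisymmetries $R_{j~kl}^{~i}=-R_{j~lk}^{~i}$ and $Q_{j~\alpha\beta}^{~i}=-Q_{j~\beta\alpha}^{~i}$), and to confirm that the second defining equation $\boldsymbol{\omega}+\boldsymbol{\omega}^t=-2H$ is not needed for these identities since they follow purely from torsion-freeness.
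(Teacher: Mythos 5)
Your proposal is correct and is essentially the paper's own route: the paper gives no proof of this lemma, deferring to \cite{Mo}, where the standard argument is exactly yours --- differentiate the torsion-free structure equation $d\vartheta=\vartheta\wedge\bm{\omega}$ to obtain $\omega^j\wedge\Omega_j^{\,i}=0$, then compare the h-h-h, h-h-v and h-v-v components of this $3$-form (which are independent since the $\omega$'s form a coframe of the $(2n-1)$-dimensional $SM$) to read off the cyclic identity for $R$, the $(j,k)$-symmetry of $P$, and $Q=0$ from its built-in antisymmetry. You are also right that the almost-metric-compatibility equation $\bm{\omega}+\bm{\omega}^t=-2H$ is not needed; it only enters the further identities of Lemma \ref{bianchi 2}.
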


Some more involved Bianchi identities derived from the almost metric preserving properties of Chern connection.

\begin{lem}[\cite{BaoChernShen,ChernShen,Mo}]\label{bianchi 2}
$$R_{j~kl}^{~i}+R_{i~kl}^{~j}-2H_{ij\gamma}R_{n~kl}^{~\gamma}=0,$$
$$H_{ij\gamma;\alpha}=H_{ij\alpha;\gamma},$$
and
\begin{align}\label{b5}
P_{j~k\gamma}^{~i}+P_{i~k\gamma}^{~j}-2H_{ij\beta}P_{n~k\gamma}^{~\beta}+2H_{ij\gamma|k}=0.
\end{align}
where the following notation is adopted
\begin{align*}
&dH_{ij\gamma}-H_{ik\gamma}\omega_j^k-H_{jk\gamma}\omega_i^k-H_{ij\beta}\omega_{\gamma}^{\beta}=:H_{ij\gamma|k}\omega^k+H_{ij\gamma;\alpha}\omega^{n+\alpha}.
\end{align*}
\end{lem}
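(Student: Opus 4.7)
The plan is to exterior-differentiate the almost-metric compatibility relation $\omega_i^j + \omega_j^i = -2H_{ij}$ from the second line of (\ref{Chern connection structure eq. matrix}), and then read off the three identities from the three bidegree components (h-h, h-v, v-v) of the resulting 2-form identity.

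First I would apply $d$ to the component equation $\omega_i^j + \omega_j^i = -2H_{ij}$, substitute the definition $d\omega_j^i = \Omega_j^i + \omega_j^k\wedge\omega_k^i$ of the curvature forms, and then replace each $\omega_k^j$ by $-\omega_j^k - 2H_{jk}$ via the almost-metric relation itself. The pure $\omega_i^k\wedge\omega_j^k$ cross-terms cancel in pairs by antisymmetry of the wedge product, giving
\begin{align*}
\Omega_i^j + \Omega_j^i = -2\,dH_{ij} + 2\,\omega_i^k\wedge H_{jk} + 2\,\omega_j^k\wedge H_{ik}.
\end{align*}

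Next, I would expand $dH_{ij} = dH_{ij\gamma}\wedge\omega^{n+\gamma} + H_{ij\gamma}\,d\omega^{n+\gamma}$ and substitute the definition of the covariant derivative $H_{ij\gamma|k}$, $H_{ij\gamma;\alpha}$ stated at the end of Section~3.2. The resulting connection terms $H_{ik\gamma}\omega_j^k\wedge\omega^{n+\gamma}$ and $H_{jk\gamma}\omega_i^k\wedge\omega^{n+\gamma}$ re-assemble into $\omega_j^k\wedge H_{ik}$ and $\omega_i^k\wedge H_{jk}$, exactly cancelling those in the previous display. Then, using $\omega_\alpha^n = \omega^{n+\alpha}$ together with the curvature equation, one writes $d\omega^{n+\gamma} = \Omega_\gamma^n + \omega_\gamma^\beta\wedge\omega^{n+\beta}$, and applies the almost-metric relation once more to symmetrize the remaining $H_{ij\beta}\omega_\gamma^\beta\wedge\omega^{n+\gamma}$ contribution. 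The outcome is a master identity of the schematic shape
\begin{align*}
\Omega_i^j + \Omega_j^i + 2H_{ij\gamma}\,\Omega_\gamma^n + 2H_{ij\gamma|k}\,\omega^k\wedge\omega^{n+\gamma} + 2H_{ij\gamma;\alpha}\,\omega^{n+\alpha}\wedge\omega^{n+\gamma} - 4H_{ij\beta}H_{\gamma\beta\alpha}\,\omega^{n+\alpha}\wedge\omega^{n+\gamma} = 0 .
\end{align*}

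Finally, I would plug in the curvature decomposition of $\Omega_i^j$ and $\Omega_\gamma^n$ and compare coefficients in the three bidegree types. The $\omega^{n+\alpha}\wedge\omega^{n+\gamma}$ component immediately yields $H_{ij\gamma;\alpha}=H_{ij\alpha;\gamma}$, since $Q\equiv 0$ by Lemma~\ref{bianchi 1} and $H_{ij\beta}H_{\gamma\beta\alpha}$ is symmetric in $(\alpha,\gamma)$ (because the Cartan tensor, hence $H_{\gamma\beta\alpha}$, is totally symmetric in its three indices) and drops out against the antisymmetric wedge. The $\omega^k\wedge\omega^l$ and $\omega^k\wedge\omega^{n+\gamma}$ components give the $R$- and $P$-identities after the ``$n$-indexed'' coefficients $R_{\gamma~kl}^{~n}$ and $P_{\gamma~k\beta}^{~n}$ coming from the $2H_{ij\gamma}\Omega_\gamma^n$ term are re-expressed, up to sign, as $R_{n~kl}^{~\gamma}$ and $P_{n~k\gamma}^{~\beta}$. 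This last identification is obtained by applying the just-extracted identity to the pair $(i,j) = (\beta, n)$ together with the vanishing $H_{n\beta\alpha}=0$, which in turn follows from $A_{ijk}y^k = 0$ (a consequence of the homogeneity of $F$).

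The main obstacle is the bookkeeping in the second step: tracking signs while collecting the $H\wedge H$ terms arising from $d\omega^{n+\gamma}$ and from the second use of the almost-metric relation, and verifying that the connection-correction terms in $dH_{ij\gamma}$ match precisely with $2\omega_i^k\wedge H_{jk} + 2\omega_j^k\wedge H_{ik}$ so that the identity closes. The auxiliary identity $H_{n\beta\alpha}=0$ is the key fact that allows the ``mixed'' curvature components to be re-expressed in the form appearing in the statement.
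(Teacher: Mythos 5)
The paper itself contains no proof of this lemma at all --- it defers to the reference \cite{Mo} (``For the proof of the above Bianchi identities, one refers to \cite{Mo}'') --- and your proposal supplies precisely the standard derivation that the cited source carries out: exterior-differentiate the almost-metric relation $\omega_i^j+\omega_j^i=-2H_{ij}$ and sort by bidegree. I have checked your computation against the paper's stated conventions and it is sound: the $\omega_i^k\wedge\omega_j^k$ cross-terms do cancel in pairs; the connection corrections inside $dH_{ij\gamma}\wedge\omega^{n+\gamma}$ exactly absorb $2\omega_i^k\wedge H_{jk}+2\omega_j^k\wedge H_{ik}$; the formula $d\omega^{n+\gamma}=\Omega_\gamma^n+\omega_\gamma^\beta\wedge\omega^{n+\beta}$ is correct (using $\omega_n^n=0$); and your master identity is exactly what results. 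The v-v extraction of $H_{ij\gamma;\alpha}=H_{ij\alpha;\gamma}$ is complete, since total symmetry of $H_{\alpha\beta\gamma}$ kills the quadratic term and $Q=0$ the curvature contribution, and your use of $H_{n\beta\alpha}=0$ (from $A_{ijk}y^k=0$ and $u_n^i=y^i/F$) is right.

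Two remarks. First, a small omission: at $(i,j)=(\beta,n)$ the $P$-component of your master identity needs $H_{\beta n\gamma|k}=0$ in addition to $H_{n\beta\alpha}=0$; this is a one-line check (the only surviving correction term is $-H_{\beta\mu\gamma}\omega_n^{\mu}=H_{\beta\mu\gamma}\omega^{n+\mu}$, which is purely vertical), and in fact you can bypass the bootstrap entirely, because $\Omega_\gamma^n+\Omega_n^\gamma=-\left(\omega_\gamma^\beta+\omega_\beta^\gamma\right)\wedge\omega^{n+\beta}=2H_{\gamma\beta\mu}\,\omega^{n+\mu}\wedge\omega^{n+\beta}=0$ directly by symmetry of $H_{\gamma\beta\mu}$. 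Second, your hedge ``up to sign'' is where the only genuine issue sits, and it is the paper's rather than yours: the bootstrap gives $R_{\gamma~kl}^{~n}=-R_{n~kl}^{~\gamma}$ and $P_{\gamma~k\beta}^{~n}=-P_{n~k\beta}^{~\gamma}$, so starting from the displayed structure equation $\bm{\omega}+\bm{\omega}^t=-2H$ your argument literally yields $R_{j~kl}^{~i}+R_{i~kl}^{~j}-2H_{ij\gamma}R_{n~kl}^{~\gamma}=0$ and $P_{j~k\gamma}^{~i}+P_{i~k\gamma}^{~j}-2H_{ij\beta}P_{n~k\gamma}^{~\beta}+2H_{ij\gamma|k}=0$, i.e.\ the lemma with the signs of all $H$-linear terms flipped; the signs as printed in the lemma correspond to the opposite normalization $\bm{\omega}+\bm{\omega}^t=+2H$. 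The paper is not self-consistent on this point (in the proof of its Lie-derivative theorem it uses $\omega_\alpha^\beta+\omega_\beta^\alpha=H_{\alpha\beta\gamma}\omega^{n+\gamma}$, which is incompatible with its own second structure equation), so your scheme is a correct and essentially complete proof modulo fixing one global sign convention for $H$, which you should state explicitly rather than leave as ``up to sign.''
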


As consequences Lemma \ref{bianchi 1} and Lemma \ref{bianchi 2}, we have the following formulas about the Chern-Minkowski curvature.
\begin{lem}[\cite{BaoChernShen,ChernShen,Mo}]\label{bianchi 3}
\begin{align}\label{b6}
P_{n~k\gamma}^{~i}=-H_{ki\gamma|n},
\end{align}
and
\begin{equation}\label{b7}
P_{j~k\gamma}^{~i}=H_{ij\beta}H_{k\beta\gamma|n}+H_{ki\beta}H_{j\beta\gamma|n}-H_{jk\beta}H_{i\beta\gamma|n}-H_{ij\gamma|k}-H_{ki\gamma|j}+H_{jk\gamma|i}.
\end{equation}
\end{lem}
%For the proof of the above Bianchi identities, one refers to \cite{Mo}.

We define two symmetric tensor fields on $TM_0$ as follows
\begin{align*}
\hat{g}=\sum_{j=1}^{n}\omega^{n+j}\otimes\omega^{n+j},
\quad {\rm and}\quad
\hat{A}=H_{\alpha\beta\gamma}\omega^{n+\alpha}\otimes\omega^{n+\beta}\otimes\omega^{n+\gamma}.
\end{align*}
For any $p\in M$, the restrictions of $\hat{g}$ and $\hat{A}$ on each fiber $T_pM\setminus\{0\}$ gives the fundamental form and Cartan tensor of the Minkowski space $(T_pM, \mathbf{F}_{T_pM})$, respectively.

\begin{prop}\label{horizontal lie g A}
The Lie derivatives of $\hat{g}$ and $\hat{A}$  along horizontal directions  are given by,
\begin{align}\label{horizontal lie d of hat g}
\mathcal{L}_{\mathbf{e}_i}\hat{g}\equiv-2\sum_{\alpha,\beta=1}^{n-1}P_{n~i\alpha}^{~\beta}\omega^{n+\alpha}\otimes\omega^{n+\beta}~({\rm mod}~\omega^1,\ldots,\omega^n),
\end{align}
%\begin{align}
%\mathcal{L}_{\mathbf{e}_i}\left(\omega^{n+1}\wedge\cdots\wedge\omega^{2n-1}\right)\equiv-\left(\sum_{\alpha=1}^{n-1}P_{n~i\alpha}^{~\alpha}\right)\omega^{n+1}\wedge\cdots\wedge\omega^{2n-1}~({\rm mod}~\omega^i)
%\end{align}
and
\begin{align}\label{horizontal lie d of hat A}
\mathcal{L}_{\mathbf{e}_i}\hat{A}
\equiv Z_{\alpha\beta\gamma i}\omega^{n+\alpha}\otimes\omega^{n+\beta}\otimes\omega^{n+\gamma}~({\rm mod}~\omega^1,\ldots,\omega^n),
\end{align}
where $Z_{\alpha\beta\gamma i}=H_{\alpha\beta\gamma|i}-H_{\mu\beta\gamma}P_{n~i\alpha}^{~\mu}-H_{\alpha\mu\gamma}P_{n~i\beta}^{~\mu}-H_{\alpha\beta\mu}P_{n~i\gamma}^{~\mu}$.

Let $Z$ be the tensor defined by $Z_{\alpha\beta\gamma i}$, then $Z=0$ if and only if $P=0$.
\end{prop}
\begin{proof}
Since that
\begin{align*}
\Omega_n^{\alpha}=d\omega_n^{\alpha}-\omega_n^{\beta}\wedge\omega_{\beta}^{\alpha}=-d\omega^{n+\alpha}+\omega^{n+\beta}\wedge\omega^{\alpha}_{\beta},
\end{align*}
then
\begin{align*}
d\omega^{n+\alpha}&=-\Omega_n^{\alpha}+\omega^{n+\beta}\wedge\omega^{\alpha}_{\beta}\\
&=-\frac{1}{2}R_{n~jk}^{~\alpha}\omega^j\wedge\omega^k-P_{n~j\beta}^{~\alpha}\omega^j\wedge\omega^{n+\beta}+\omega^{n+\beta}\wedge\omega^{\alpha}_{\beta}.
\end{align*}
By Cartan homotopy formula, we have
\begin{align*}
\mathcal{L}_{\mathbf{e}_i}\omega^{n+\alpha}&=\left(i_{\mathbf{e}_i}d+di_{\mathbf{e}_i}\right)\omega^{n+\alpha}\\
&=i_{\mathbf{e}_i}\left[-\frac{1}{2}R_{n~jk}^{~\alpha}\omega^j\wedge\omega^k-P_{n~j\beta}^{~\alpha}\omega^j\wedge\omega^{n+\beta}+\omega^{n+\beta}\wedge\omega^{\alpha}_{\beta}\right]\\
&=-R_{n~ik}^{~\alpha}\omega^k-P_{n~i\beta}^{~\alpha}\omega^{n+\beta}-\omega^{\alpha}_{\beta}(\mathbf{e}_i)\omega^{n+\beta}.
\end{align*}
Using the fact $\mathbf{e}_i(F)=0$ and $\omega^{2n}=-d\log F$, one gets $\mathcal{L}_{\mathbf{e}_i}\omega^{2n}=0$.
So we have
\begin{align*}
&\mathcal{L}_{\mathbf{e}_i}(\sum_{j=1}^{n}\omega^{n+j}\otimes\omega^{n+j})
=\mathcal{L}_{\mathbf{e}_i}(\sum_{\alpha=1}^{n-1}\omega^{n+\alpha}\otimes\omega^{n+\alpha})\\
=&\sum_{\alpha=1}^{n-1}\left[\left(\mathcal{L}_{\mathbf{e}_i}\omega^{n+\alpha}\right)\otimes\omega^{n+\alpha}+\omega^{n+\alpha}\otimes\left(\mathcal{L}_{\mathbf{e}_i}\omega^{n+\alpha}\right)\right]\\
=&\sum_{\alpha=1}^{n-1}\left[\left(-R_{n~ik}^{~\alpha}\omega^k-P_{n~i\beta}^{~\alpha}\omega^{n+\beta}-\omega^{\alpha}_{\beta}(\mathbf{e}_i)\omega^{n+\beta}\right)\otimes\omega^{n+\alpha}\right.\\
&\qquad\left.+\omega^{n+\alpha}\otimes\left(-R_{n~ik}^{~\alpha}\omega^k-P_{n~i\beta}^{~\alpha}\omega^{n+\beta}-\omega^{\alpha}_{\beta}(\mathbf{e}_i)\omega^{n+\beta}\right)\right]\\
\equiv&-\sum_{\alpha=1}^{n-1}\left(P_{n~i\beta}^{~\alpha}+\omega^{\alpha}_{\beta}(\mathbf{e}_i)\right)\left(\omega^{n+\beta}\otimes\omega^{n+\alpha}+\omega^{n+\alpha}\otimes\omega^{n+\beta}\right)\\
\equiv&-2\sum_{\alpha,\beta=1}^{n-1}P_{n~i\alpha}^{~\beta}\omega^{n+\alpha}\otimes\omega^{n+\beta}~({\rm mod}~\omega^1,\ldots,\omega^n),
\end{align*}
where we have use the facts derived from Lemma \ref{sturcture eq} and \ref{bianchi 3},
$$P_{n~i\beta}^{~\alpha}=P_{n~i\alpha}^{~\beta},\quad{\rm and}\quad\omega^{\beta}_{\alpha}+\omega^{\alpha}_{\beta}=H_{\alpha\beta\gamma}\omega^{n+\gamma}.$$
%By a similar calculation, we obtain
%\begin{align*}
%&\mathcal{L}_{\mathbf{e}_i}\left(\omega^{n+1}\wedge\cdots\wedge\omega^{2n-1}\right)\\
%=&\sum_{\alpha=1}^{n-1}\omega^{n+1}\wedge\cdots\wedge\omega^{n+\alpha-1}\wedge\left(\mathcal{L}_{\mathbf{e}_i}\omega^{n+\alpha}\right)\wedge\omega^{n+\alpha+1}\wedge\cdots\wedge\omega^{2n-1}\\
%\equiv&-\sum_{\alpha=1}^{n-1}\omega^{n+1}\wedge\cdots\wedge\omega^{n+\alpha-1}\wedge\left[\left(P_{n~i\beta}^{~\alpha}\omega^{n+\beta}+\omega^{\alpha}_{\beta}(\mathbf{e}_i)\right)\omega^{n+\beta}\right]\wedge\omega^{n+\alpha+1}\wedge\cdots\wedge\omega^{2n-1}~({\rm mod}~\omega^i)\\
%\equiv&-\left(\sum_{\alpha=1}^{n-1}P_{n~i\alpha}^{~\alpha}\right)\omega^{n+1}\wedge\cdots\wedge\omega^{2n-1}~({\rm mod}~\omega^i).
%\end{align*}

Furthermore, we have
\begin{align*}
&\mathcal{L}_{\mathbf{e}_i}\left(H_{\alpha\beta\gamma}\omega^{n+\alpha}\otimes\omega^{n+\beta}\otimes\omega^{n+\gamma}\right)\\
=&\mathbf{e}_i(H_{\alpha\beta\gamma})\omega^{n+\alpha}\otimes\omega^{n+\beta}\otimes\omega^{n+\gamma}+H_{\alpha\beta\gamma}\left(\mathcal{L}_{\mathbf{e}_i}\omega^{n+\alpha}\right)\otimes\omega^{n+\beta}\otimes\omega^{n+\gamma}\\
&\quad+H_{\alpha\beta\gamma}\omega^{n+\alpha}\otimes\left(\mathcal{L}_{\mathbf{e}_i}\omega^{n+\beta}\right)\otimes\omega^{n+\gamma}+H_{\alpha\beta\gamma}\omega^{n+\alpha}\otimes\omega^{n+\beta}\otimes\left(\mathcal{L}_{\mathbf{e}_i}\omega^{n+\gamma}\right)\\
\equiv&\mathbf{e}_i(H_{\alpha\beta\gamma})\omega^{n+\alpha}\otimes\omega^{n+\beta}\otimes\omega^{n+\gamma}-H_{\alpha\beta\gamma}\left(P_{n~i\mu}^{~\alpha}+\omega^{\alpha}_{\mu}(\mathbf{e}_i)\right)\omega^{n+\mu}\otimes\omega^{n+\beta}\otimes\omega^{n+\gamma}\\
&\quad-H_{\alpha\beta\gamma}\omega^{n+\alpha}\otimes\left(P_{n~i\mu}^{~\beta}+\omega^{\beta}_{\mu}(\mathbf{e}_i)\right)\omega^{n+\mu}\otimes\omega^{n+\gamma}\\
&\quad-H_{\alpha\beta\gamma}\omega^{n+\alpha}\otimes\omega^{n+\beta}\otimes\left(P_{n~i\mu}^{~\gamma}+\omega^{\gamma}_{\mu}(\mathbf{e}_i)\right)\omega^{n+\mu}\\
\equiv&\mathbf{e}_i(H_{\alpha\beta\gamma})\omega^{n+\alpha}\otimes\omega^{n+\beta}\otimes\omega^{n+\gamma}-H_{\alpha\beta\gamma}\omega^{\alpha}_{\mu}(\mathbf{e}_i)\omega^{n+\mu}\otimes\omega^{n+\beta}\otimes\omega^{n+\gamma}\\
&\quad-H_{\alpha\beta\gamma}\omega^{\beta}_{\mu}(\mathbf{e}_i)\omega^{n+\alpha}\otimes\omega^{n+\mu}\otimes\omega^{n+\gamma}-H_{\alpha\beta\gamma}\omega^{\gamma}_{\mu}(\mathbf{e}_i)\omega^{n+\alpha}\otimes\omega^{n+\beta}\otimes\omega^{n+\mu}\\
&\quad-H_{\alpha\beta\gamma}P_{n~i\mu}^{~\alpha}\omega^{n+\mu}\otimes\omega^{n+\beta}\otimes\omega^{n+\gamma}-H_{\alpha\beta\gamma}P_{n~i\mu}^{~\beta}\omega^{n+\alpha}\otimes\omega^{n+\mu}\otimes\omega^{n+\gamma}\\
&\quad-H_{\alpha\beta\gamma}P_{n~i\mu}^{~\gamma}\omega^{n+\alpha}\otimes\omega^{n+\beta}\otimes\omega^{n+\mu}\\
\equiv&\left(\mathbf{e}_i(H_{\alpha\beta\gamma})-H_{\mu\beta\gamma}\omega^{\mu}_{\alpha}(\mathbf{e}_i)-H_{\alpha\mu\gamma}\omega^{\mu}_{\beta}(\mathbf{e}_i)-H_{\alpha\beta\mu}\omega^{\mu}_{\gamma}(\mathbf{e}_i)\right)\omega^{n+\alpha}\otimes\omega^{n+\beta}\otimes\omega^{n+\gamma}\\
&\quad-\left(H_{\mu\beta\gamma}P_{n~i\alpha}^{~\mu}+H_{\alpha\mu\gamma}P_{n~i\beta}^{~\mu}+H_{\alpha\beta\mu}P_{n~i\gamma}^{~\mu}\right)\omega^{n+\alpha}\otimes\omega^{n+\beta}\otimes\omega^{n+\gamma}\\
\equiv&\left(H_{\alpha\beta\gamma|i}-H_{\mu\beta\gamma}P_{n~i\alpha}^{~\mu}-H_{\alpha\mu\gamma}P_{n~i\beta}^{~\mu}-H_{\alpha\beta\mu}P_{n~i\gamma}^{~\mu}\right)\omega^{n+\alpha}\otimes\omega^{n+\beta}\otimes\omega^{n+\gamma}~({\rm mod}~\omega^1,\ldots,\omega^n).
\end{align*}

From the definition of $Z$ and Lemma \ref{bianchi 3}, $Z=0$ implies $Z_{\alpha\beta\gamma|n}=-P^{\beta}_{n~\alpha\gamma}=0$ and $H_{\alpha\beta\gamma|i}=0$.
Using Lemma \ref{bianchi 3} once more, $Z=0$ implies $P=0$. The inverse statement is clear.
\end{proof}

\subsection{The Cartan-type form and its properties}
\

Let
\begin{align*}
\eta={\rm tr}[H]~\in\Omega^1(TM_0).
\end{align*}
It is referred as the Cartan-type in \cite{FL}. Then Cartan-type form has a local expression
\begin{align*}
\eta=\sum_{i=1}^nH_{ii\gamma}\omega^{n+\gamma}=:H_{\gamma}\omega^{n+\gamma}.
\end{align*}
It is clear that the restriction of $\eta$ on the fibers of $TM_0$ are just the Cartan forms for the corresponding Minkowski spaces.

\begin{remark}
In literature, Cartan from of a Finsler manifold is locally defined by $I=H_{\gamma}\omega^{\gamma}$. This is the reason why we call $\eta$ the Cartan-type form.
Some simple calculus show that $\eta$ and $I$ behavior differently. For example,  $dI=0$ means $M$ is a Riemannian manifold.
However, we will see that there are non-Riemannian metrics such that $d\eta=0$.
\end{remark}

Let $R^{\rm Ch}$ and $R^{\rm Ca}$ be the curvature of $\nabla^{\rm Ch}$ and $\nabla^{\rm Ca}$, respectively. Then
we have

\begin{prop} \label{lem deta}
The exterior differentiation of $\eta$ is given by
\begin{equation}
d\eta=-\tr [R^{\rm Ch}]. \label{deta}
\end{equation}
Then $d\eta$ has the local formula
\begin{equation*}
d\eta=d(H_{\gamma}\omega^{n+\gamma})=-\frac{1}{2}R_{i~kl}^{~i}\omega^k\wedge\omega^l-P_{i~k\gamma}^{~i}\omega^k\wedge\omega^{n+\gamma}.
\end{equation*}
\end{prop}
\begin{proof}
It is well known that the Cartan connection is metric-compatible.
Then $\tr [R^{\rm Ca}]=0$. So one has
\begin{align*}
0&=\tr [R^{\rm Ca}]=\tr [(\nabla^{\rm Ca})^2]\\
&=\tr \left[\left(\nabla^{\rm Ch}+H\right)^2\right]\\
&=\tr\left[\left(\nabla^{\rm Ch}\right)^2+[\nabla^{\rm
Ch},H]+[H,H]\right]\\
&=\tr\left[R^{\rm Ch}\right]+\tr\left[[\nabla^{\rm
Ch},H]\right]+\tr\left[[H,H]\right]
\end{align*}
where $[\cdot,\cdot]$ denotes the super bracket on $\Omega^*(TM_0,{\rm
End}(H(TM_0)))$. By the element facts in \cite{Zhang}, we have
$$\tr\left[[H,H]\right]=0,\quad {\rm and}\quad d\tr[H]=\tr\left[[\nabla^{\rm
Ch},H]\right].$$ So the proof is complete.
\end{proof}
From Proposition \ref{lem deta}, Berwald manifolds must have closed Cartan-type form.

\begin{prop}\label{horizontal lie eta}
The Lie derivatives of $\eta$ along horizontal vectors is given as follows,
\begin{align}\label{horizontal lie d of hat eta}
\mathcal{L}_{\mathbf{e}_i}\eta\equiv -i_{\mathbf{e}_i}{\rm tr}P~({\rm mod}~\omega^1,\ldots,\omega^n).
\end{align}
\end{prop}
\begin{proof}
Using Proposition \ref{lem deta}, we get
\begin{equation*}
  \mathcal{L}_{\mathbf{e}_i}\eta=(di_{\mathbf{e}_i}+i_{\mathbf{e}_i}d)\eta=i_{\mathbf{e}_i}d\eta\equiv -i_{\mathbf{e}_i}{\rm tr}P~({\rm mod}~\omega^1,\ldots,\omega^n).
\end{equation*}
\end{proof}

The following discussion gives another explanation of $\eta$ and its relation with the $S$-curvature.

On a local coordinate chart $(U;x^i)$, let $dV_M=\sigma(x)dx^1\wedge\cdots\wedge dx^n$
be any volume form on $M$.  The following important function
on $TM_0$ is well defined,
$$\tau=\ln\frac{\sqrt{\det{g_{ij}}}}{\sigma(x)}.$$
$\tau$ is called the distortion of $(M,\mathbf{F})$. $\tau$ is a very
important invariant of the Finsler manfold, which is first
introduced by Zhongmin Shen. One refers to \cite{ChernShen}
for more discussion about the distortion $\tau$.

Let $\mathrm{l}=\mathbb{R}\mathbf{G}$ be the line bundle generated
by the Reeb vector field $\mathbf{G}=\mathbf{F}\mathbf{e}_n$. The quotient subbundle
$H(TM_0)/\mathrm{l}$ will be canonically chosen as the orthogonal
complement of $\mathrm{l}$ in $H(TM_0)$ with respect to $g$, i.e.,
$$\mathrm{l}^{\bot}=H(TM_0)/\mathrm{l}.$$

\begin{definition}
Using the Levi-Civita connection $\D{T(TM_0)}$ of the metric $g^{T(TM_0)}$, we introduce the following three
operator on $\Omega^*(TM_0)$.
\begin{equation*}
d^{\mathrm{l}^{\bot}}=\omega^{\alpha}\wedge\D{T^*(TM_0)}_{\mathbf{e}_{\alpha}},\quad
d^{\mathrm{l}}=\omega^{n}\wedge\D{T^*(TM_0)}_{\mathbf{e}_n},\quad
d^{V}=\omega^{n+i}\wedge\D{T^*(TM_0)}_{\mathbf{e}_{n+i}},
\end{equation*}
where $\D{T^*(TM_0)}$ denotes the Levi-Civita connection on the cotangent bundle $T^*(TM_0)$.
\end{definition}
It is obvious that the above operators are well defined. So the exterior differential operator $d$ on $TM_0$ splits to three parts
$$d=d^{\mathrm{l}^{\bot}}+d^{\mathrm{l}}+d^{V}.$$

\begin{lem}
$$d^{V}\tau=\eta.$$
\end{lem}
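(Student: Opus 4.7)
The plan is to evaluate $d^V\tau$ directly in a local coordinate chart and identify the result with the local expression $\eta = H_\gamma\omega^{n+\gamma}$, $H_\gamma = \sum_{i=1}^n H_{ii\gamma}$, that has already been established. Since $\tau$ is a function (a $0$-form), the definition of $d^V$ reduces to
\[
d^V\tau = \mathbf{e}_{n+\alpha}(\tau)\,\omega^{n+\alpha},
\]
so the whole task is to compute the vertical derivative $\mathbf{e}_{n+\alpha}(\tau)$ and compare it with $H_\alpha$.

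First I would unwind $\mathbf{e}_{n+\alpha}$ using the formulas already recorded in the paper, namely $\mathbf{e}_{n+\alpha}=-u_\alpha^{\,j}\frac{\delta}{\delta y^j}=-u_\alpha^{\,j}F\,\partial_{y^j}$. Because $\sigma(x)$ depends only on $x$, the term $-\ln\sigma(x)$ in $\tau$ is annihilated by every $\partial_{y^j}$, so
\[
\mathbf{e}_{n+\alpha}(\tau) = -u_\alpha^{\,j}F\,\partial_{y^j}\!\bigl(\tfrac{1}{2}\ln\det g_{kl}\bigr) = -\tfrac{1}{2}u_\alpha^{\,j}F\,g^{kl}\,\partial_{y^j}g_{kl}.
\]
Using $g_{kl}=\tfrac{1}{2}[F^2]_{y^ky^l}$ and the definition $A_{ijk}=\tfrac{F}{4}[F^2]_{y^iy^jy^k}$, one has $\partial_{y^j}g_{kl}=2F^{-1}A_{klj}$, hence
\[
\mathbf{e}_{n+\alpha}(\tau) = -u_\alpha^{\,j}\,g^{kl}A_{klj}.
\]

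Next I would identify the right-hand side with $H_\alpha$. The orthonormality $g(e_i,e_j)=\delta_{ij}$ of the frame $\{e_1,\dots,e_n\}$ dual to $\{\omega^1,\dots,\omega^n\}$ translates, via the formulas $(u^j_i)=(v^i_j)^{-1}$ and $e_i=u_i^{\,j}\partial_{x^j}$, into $\sum_{i=1}^n u_i^{\,p}u_i^{\,q}=g^{pq}$. Combined with the Cartan-endomorphism formula $H_{ij\gamma}=-A_{pqk}u_i^{\,p}u_j^{\,q}u_\gamma^{\,k}$ already recalled in the paper, this gives
\[
H_\alpha = \sum_{i=1}^n H_{ii\alpha} = -\Bigl(\sum_{i=1}^n u_i^{\,p}u_i^{\,q}\Bigr)A_{pqk}u_\alpha^{\,k} = -g^{pq}A_{pqk}u_\alpha^{\,k} = \mathbf{e}_{n+\alpha}(\tau).
\]
Therefore $d^V\tau = H_\alpha\omega^{n+\alpha}=\eta$, as required.

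Essentially no step is a serious obstacle: the proof is a bookkeeping exercise once one remembers that $d^V$ acts on functions purely by vertical derivatives (so $\sigma(x)$ drops out), that $\partial_{y^j}g_{kl}$ is controlled by $A_{klj}$, and that the orthonormality of $\{e_i\}$ contracts two of the three $u$'s appearing in $H_{ii\gamma}$ into $g^{pq}$. The mildest subtlety worth double-checking is index-range consistency: $\alpha$ runs from $1$ to $n-1$ in $\eta$, but the sum $\sum_{i=1}^n u_i^{\,p}u_i^{\,q}=g^{pq}$ correctly includes $i=n$ (the ``Reeb direction''), which is exactly what is needed to match the full trace $\sum_{i=1}^n H_{ii\alpha}$ defining $H_\alpha$.
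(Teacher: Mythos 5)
Your proof is correct, but it takes a genuinely different route from the paper's. The paper never differentiates $\det g_{ij}$ directly: it computes the exterior derivative of the horizontal volume form $\omega=\omega^1\wedge\cdots\wedge\omega^n$ in two ways, first via the structure equations (torsion-freeness $d\omega^i=\omega^j\wedge\omega^i_j$ together with $\bm{\omega}+\bm{\omega}^t=-2H$, which gives $\sum_i\omega^i_i=-\eta$ and hence $d\omega=\eta\wedge\omega$), and second via the coordinate expression $\omega=\sqrt{\det(g_{ij})}\,dx^1\wedge\cdots\wedge dx^n$ (which gives $d\omega=d\ln\sqrt{\det g_{ij}}\wedge\omega$); comparing the two yields $\eta\equiv d\ln\sqrt{\det g_{ij}}\equiv d\tau\equiv d^V\tau\ (\mathrm{mod}\ \omega^i)$, and since $\eta$ and $d^V\tau$ are both purely vertical, equality follows. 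You instead verify the identity pointwise and explicitly: reducing $d^V\tau$ on a $0$-form to $\mathbf{e}_{n+\alpha}(\tau)\,\omega^{n+\alpha}$, applying Jacobi's formula $\partial_{y^j}\ln\det g = g^{kl}\partial_{y^j}g_{kl}$ with $\partial_{y^j}g_{kl}=2F^{-1}A_{klj}$, and contracting via the frame identity $\sum_{i=1}^n u_i^{\,p}u_i^{\,q}=g^{pq}$ (which is the correct translation of orthonormality, and your remark that the $i=n$ term must be included in the trace is exactly the right point to flag, even though $A_{pqk}u_n^{\,p}=0$ makes that term vanish by homogeneity of $F$). Each approach has its merits: the paper's argument is computation-light, avoids choosing the $u$-matrices, and isolates the reusable congruence $\eta\equiv d\ln\sqrt{\det g_{ij}}\ (\mathrm{mod}\ \omega^i)$, at the cost of orientation conventions and mod-$(\omega^i)$ bookkeeping; yours is elementary and self-contained, produces the sharper pointwise statement $H_\gamma=\mathbf{e}_{n+\gamma}(\tau)$ directly from the definitions, and makes the sign conventions transparent. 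Both ultimately rest on the same fact, that the trace of the Cartan endomorphism is the vertical logarithmic derivative of the metric density.
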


\begin{proof}
First we have
\begin{align*}
d^{V}\tau&\equiv d\tau\equiv d\ln\sqrt{\det{g_{ij}}}-d\ln\sigma(x)\equiv d\ln\sqrt{\det{g_{ij}}}~({\rm mod}~\omega^1,\ldots,\omega^n).
\end{align*}
Assume that $\{\omega^1,\ldots,\omega^n\}$ has the same
orientation with $M$. Let
$\omega=\omega^1\wedge\cdots\wedge\omega^n$, by Lemma \ref{sturcture eq}, one has
\begin{align*}
d\omega&=\sum_{i}(-1)^{i-1}d\omega^i\wedge\omega^1\wedge\cdots\wedge\widehat{\omega^i}\wedge\cdots\wedge\omega^n\\
&=\sum_{i}(-1)^{i-1}\omega^j\wedge\omega_j^i\wedge\omega^1\wedge\cdots\wedge\widehat{\omega^i}\wedge\cdots\wedge\omega^n\\
&=-(\sum_{i}\omega_i^i)\wedge\omega=\eta\wedge\omega.
\end{align*}

Since
$$\omega=\omega^1\wedge\cdots\wedge\omega^n=\sqrt{\det(g_{ij})}dx^1\wedge\cdots\wedge dx^n,$$
it follows that
\begin{align*}
d\omega=d\sqrt{\det(g_{ij})}\wedge dx^1\wedge\cdots\wedge dx^n
=d\ln\sqrt{\det{g_{ij}}}\wedge \omega.
\end{align*}
So one has
$$d\ln\sqrt{\det{g_{ij}}}\wedge \omega=\eta\wedge\omega.$$
Hence
$$\eta\equiv d\ln\sqrt{\det{g_{ij}}}\equiv d^V\tau~({\rm mod}~\omega^1,\ldots,\omega^n).$$

\end{proof}
Set
$$S=\mathbf{G}(\tau).$$
$S$ is called the $S$-curvature of the Finsler manifold $(M,\mathbf{F})$,
which is also introduced by Zhongmin Shen. For the detail of
$S$-curvature, one refers to
\cite{ChernShen,Shenbook1,Shenbook2,Shen}.

Set $\widetilde{S}=\frac{S}{\mathbf{F}}=\mathbf{e}_n(\tau).$
If $d^{V}\widetilde{S}=0$, then $(M,\mathbf{F})$ is called of isotropic
$S$-curvature. The following corollary is simple.

\begin{mcor}
\begin{equation}
d\tau=d^{\mathrm{l}^{\bot}}\tau+\tilde{S}\omega^n+\eta.\label{dtau}
\end{equation}

\end{mcor}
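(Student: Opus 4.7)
The plan is simply to apply the decomposition of the exterior differential $d = d^{\mathrm{l}^{\bot}} + d^{\mathrm{l}} + d^{V}$ to the function $\tau$ and then identify each of the three resulting pieces. For a smooth function the covariant derivative $\nabla^{T^*(SM)}$ reduces to ordinary directional derivative, so each operator acts on $\tau$ in a completely transparent way.

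First I would write $d\tau = d^{\mathrm{l}^{\bot}}\tau + d^{\mathrm{l}}\tau + d^{V}\tau$. The first term $d^{\mathrm{l}^{\bot}}\tau$ is kept unevaluated; it is the ``horizontal transverse'' part of $d\tau$ and appears explicitly in the statement. Next, for the middle term I would use that $d^{\mathrm{l}}\tau = \omega^{n}\,\mathbf{e}_n(\tau)$ and recall that by definition $\widetilde{S} = S/\mathbf{F} = \mathbf{e}_n(\tau)$, giving $d^{\mathrm{l}}\tau = \widetilde{S}\,\omega^{n}$. Finally, the vertical part $d^{V}\tau = \eta$ is exactly the content of the lemma immediately preceding this corollary.

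Combining the three identifications yields
\begin{equation*}
d\tau = d^{\mathrm{l}^{\bot}}\tau + \widetilde{S}\,\omega^{n} + \eta,
\end{equation*}
which is the desired formula. There is no real obstacle here; the work has already been done in establishing the splitting of $d$, in defining $\widetilde{S}$ as $\mathbf{e}_n(\tau)$, and in the previous lemma $d^{V}\tau = \eta$. The corollary is merely the assembly of these three ingredients.
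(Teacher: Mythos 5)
Your proposal is correct and is exactly the argument the paper intends (the paper states the corollary without proof as an immediate consequence): split $d=d^{\mathrm{l}^{\bot}}+d^{\mathrm{l}}+d^{V}$, note that on the function $\tau$ these operators are just $\mathbf{e}_\alpha(\tau)\omega^\alpha$, $\mathbf{e}_n(\tau)\omega^n$, and the vertical part, then use $\widetilde{S}=\mathbf{e}_n(\tau)$ and the preceding lemma $d^{V}\tau=\eta$. Nothing is missing.
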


Then one has a
description of $(M,\mathbf{F})$ of isotropic $S$-curvature.

%Let
%$$J=P_{n~\gamma\alpha}^{~\gamma}\omega^{\alpha}$$
%be the mean Landsberg form. Then we have

\begin{prop}
A Finsler manifold $(M,\mathbf{F})$ is of isotropic $S$-curvature if and only
if
\begin{equation*}
J^*(d^{\mathrm{l}^{\bot}}\tau)=\mathbf{J},
\end{equation*}
where $J^*$ is the mapping given by (\ref{JiJ}).
\end{prop}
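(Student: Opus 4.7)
The plan is to apply $d$ to the identity $d\tau = d^{\mathrm{l}^{\bot}}\tau + \tilde{S}\omega^{n} + \eta$ of (\ref{dtau}) and to exploit $d^2\tau=0$. The ingredients are the structure equation $d\omega^n = \omega^\alpha\wedge\omega^{n+\alpha}$ (which follows from (\ref{Chern connection structure eq. matrix}) together with $\omega^n_\alpha = \omega^{n+\alpha}$, hence $\omega^{\alpha}_n = -\omega^{n+\alpha}$), the identity
$$d\eta = -\frac{1}{2}R_{i~kl}^{~i}\omega^k\wedge\omega^l - P_{i~k\gamma}^{~i}\omega^k\wedge\omega^{n+\gamma}$$
of Lemma \ref{lem deta}, and the explicit formula $d^{\mathrm{l}^{\bot}}\tau = \tau_{|\alpha}\omega^\alpha$ with $\tau_{|\alpha} := \mathbf{e}_\alpha(\tau)$. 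With these, $d^2\tau = 0$ reduces the proof to a single coefficient extraction.

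The decisive step is to read off the coefficient of $\omega^{n}\wedge\omega^{n+\gamma}$. Exactly three sources contribute: the piece $\tau_{|\alpha}\,d\omega^\alpha$ produces $-\tau_{|\alpha}\omega^n\wedge\omega^{n+\alpha}$ via $\omega^\alpha_n=-\omega^{n+\alpha}$; the wedge $d\tilde{S}\wedge\omega^n$ produces $-\tilde{S}_{;\gamma}\omega^n\wedge\omega^{n+\gamma}$, where $\tilde{S}_{;\gamma}:=\mathbf{e}_{n+\gamma}(\tilde{S})$; and the $P$-part of $d\eta$ restricted to $k=n$ yields $-P_{i~n\gamma}^{~i}\omega^n\wedge\omega^{n+\gamma}$, which by the Bianchi symmetry $P_{j~k\gamma}^{~i}=P_{k~j\gamma}^{~i}$ of Lemma \ref{bianchi 1} equals $-J_\gamma\omega^n\wedge\omega^{n+\gamma}$ with $J_\gamma=P_{n~i\gamma}^{~i}$ the components of $\mathbf{J}=\tr L$. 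Summing to zero yields a linear relation among $\tau_{|\gamma}$, $\tilde{S}_{;\gamma}$ and $J_\gamma$, one equation for each $\gamma=1,\ldots,n-1$.

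The conclusion is then immediate. Isotropic $S$-curvature is, by definition, $d^V\tilde{S}=0$, i.e.\ $\tilde{S}_{;\gamma}=0$ for every $\gamma$; under this hypothesis the relation above reduces to an identity between $\tau_{|\gamma}\omega^{n+\gamma}$ and $\mathbf{J}$. Applying $J^*$ to $d^{\mathrm{l}^{\bot}}\tau = \tau_{|\alpha}\omega^\alpha$ and using $J^*(\omega^\alpha)=\omega^{n+\alpha}$ from (\ref{JiJ}) translates this into the asserted identity $J^*(d^{\mathrm{l}^{\bot}}\tau) = \mathbf{J}$. Reading the same linear relation in reverse gives the converse: if $J^*(d^{\mathrm{l}^{\bot}}\tau)=\mathbf{J}$, then $\tilde{S}_{;\gamma}=0$ for every $\gamma$, hence $d^V\tilde S=0$.

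The main obstacle is careful sign bookkeeping. All the heavy machinery — structure equations, Bianchi identities, Lemma \ref{lem deta} — is already in place; what requires attention is ensuring that the antisymmetries of $\wedge$, the signs $\omega^\alpha_n=-\omega^{n+\alpha}$ and $J^*(\omega^\alpha)=\omega^{n+\alpha}$, and the minus sign in $d\eta=-\tr[\mathbf{R}]$ compose consistently to match the conventions used in the definitions of $\mathbf{J}$ and $\tilde{S}_{;\gamma}$.
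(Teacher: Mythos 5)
Your proposal follows the paper's proof step for step: the paper likewise differentiates (\ref{dtau}), invokes $d^2\tau=0$ together with Lemma \ref{lem deta} and the structure equations (\ref{Chern connection structure eq. matrix}), and reads off the $\omega^n\wedge\omega^{n+\gamma}$ component (using $P_{i~n\gamma}^{~i}=P_{n~i\gamma}^{~i}$ from Lemma \ref{bianchi 1}) to arrive at the pointwise relation $d^V\tilde{S}=J^*(d^{\mathrm{l}^{\bot}}\tau)-\mathbf{J}$, from which both implications are immediate. Your inventory of contributing terms is also right: no source other than the three you name produces a $\omega^n\wedge\omega^{n+\gamma}$ component.

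However, the sign bookkeeping you explicitly defer is exactly where the argument as written fails to deliver the statement. Summing the three coefficients you actually fix --- $-\tau_{|\gamma}$ from $\tau_{|\alpha}\,d\omega^\alpha$, $-\tilde{S}_{;\gamma}$ from $d\tilde{S}\wedge\omega^n$, and $-J_\gamma$ from the $P$-part of $d\eta$ --- yields $\tilde{S}_{;\gamma}=-\tau_{|\gamma}-J_\gamma$, hence ``isotropic if and only if $J^*(d^{\mathrm{l}^{\bot}}\tau)=-\mathbf{J}$,'' the opposite of the assertion; your closing paragraph states the $+\mathbf{J}$ version without deriving it, and since the sign in front of $\mathbf{J}$ is the entire content of the proposition, this is a genuine gap rather than a cosmetic one. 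The tension is structural: given the almost-skewness $\omega^n_\alpha=-\omega^\alpha_n=\omega^{n+\alpha}$, any single torsion-freeness convention that validates $d\omega^n=\omega^\alpha\wedge\omega^{n+\alpha}$ (which you and the paper both use) forces your sign $d\omega^\alpha\equiv-\omega^n\wedge\omega^{n+\alpha}$ modulo irrelevant terms, whereas the paper's displayed expansion takes this term with a $+$; conversely, with your choice of sign for $d\omega^\alpha$, the stated conclusion is recovered only if the $\omega^n\wedge\omega^{n+\gamma}$ coefficient of $d\eta$ is $+J_\gamma$ rather than the $-J_\gamma$ you quote from the local formula in Lemma \ref{lem deta} --- and indeed, differentiating $\eta=H_\gamma\omega^{n+\gamma}$ directly via the paper's expression for $d\omega^{n+\alpha}$ gives that coefficient as $\mathbf{e}_n(H_\gamma)-H_\mu\omega^{\mu}_{\gamma}(\mathbf{e}_n)=H_{\gamma|n}=J_\gamma$ by (\ref{b6}). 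So you cannot simultaneously correct the $d\omega^\alpha$ sign and quote Lemma \ref{lem deta} verbatim: to close the proof you must fix one convention globally and recompute the $d\eta$ coefficient within it, after which the extraction does produce the claimed equivalence. As it stands, your listed contributions prove the wrong-sign identity, and the final ``translates into the asserted identity'' papers over the discrepancy.
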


\begin{proof}

Using Proposition \ref{lem deta}, the exterior differentiation of (\ref{dtau}) gives
\begin{align*}
0=&d^2\tau=d(d^{\mathrm{l}^{\bot}}\tau)+d(\tilde{S}\omega^n)+d\eta\\
&=d(e_{\alpha}(\tau)\omega^{\alpha})+d\tilde{S}\wedge\omega^n+\tilde{S}d\omega^n+d\eta\\
&=d(e_{\alpha}(\tau))\wedge\omega^{\alpha}+e_{\alpha}(\tau)d\omega^{\alpha}+d^{\mathrm{l}^{\bot}}\tilde{S}\wedge\omega^n+
d^{\mathrm{l}}\tilde{S}\wedge\omega^n+d^V\tilde{S}\wedge\omega^n+\tilde{S}d\omega^n+d\eta\\
&=e_{\alpha}(e_{\beta}(\tau))\omega^{\alpha}\wedge\omega^{\beta}+e_{n}(e_{\beta}(\tau))\omega^{n}\wedge\omega^{\beta}+e_{n+\alpha}(e_{\beta}(\tau))\omega^{n+\alpha}\wedge\omega^{\beta}\\
&+e_{\alpha}(\tau)\omega^{j}\wedge\omega^{\alpha}_{j}+d^{\mathrm{l}^{\bot}}\tilde{S}\wedge\omega^n+
d^V\tilde{S}\wedge\omega^n+\tilde{S}\omega^{\alpha}\wedge\omega^{n+\alpha}\\
&-\frac{1}{2}R^{~i}_{i~jk}\omega^j\wedge\omega^k-P^{~i}_{i~j\gamma}\omega^j\wedge\omega^{n+\gamma}\\
&=e_{\alpha}(e_{\beta}(\tau))\omega^{\alpha}\wedge\omega^{\beta}+e_{n}(e_{\beta}(\tau))\omega^{n}\wedge\omega^{\beta}+e_{n+\alpha}(e_{\beta}(\tau))\omega^{n+\alpha}\wedge\omega^{\beta}\\
&+e_{\alpha}(\tau)\omega^{n}\wedge\omega^{n+\alpha}+e_{\alpha}(\tau)\omega^{\beta}\wedge\omega^{\alpha}_{\beta}+d^{\mathrm{l}^{\bot}}\tilde{S}\wedge\omega^n+
d^V\tilde{S}\wedge\omega^n+\tilde{S}\omega^{\alpha}\wedge\omega^{n+\alpha}\\
&-\frac{1}{2}R^{~i}_{i~jk}\omega^j\wedge\omega^k-P^{~i}_{i~j\gamma}\omega^j\wedge\omega^{n+\gamma}.
\end{align*}

It follows that
\begin{align*}
d^V\tilde{S}\wedge\omega^n-e_{\alpha}(\tau)\omega^{n+\alpha}\wedge\omega^{n}-P^{~i}_{i~n\gamma}\omega^n\wedge\omega^{n+\gamma}=0.
\end{align*}
So
\begin{align*}
d^V\tilde{S}&=e_{\alpha}(\tau)\omega^{n+\alpha}-P^{~i}_{i~n\alpha}\omega^{n+\alpha}\\
&=J^*(d^{\mathrm{l}^{\bot}}\tau)-\mathbf{J}.
\end{align*}
\end{proof}

\begin{prop}\label{prop eta=dtau}
For a Finsler manifold $(M,\mathbf{F})$, let $\tau$ be the distortion with respect to a given volume element of $M$.
Then $\eta=d\tau$ if and only if $S=0$ and $\mathbf{J}=0$.
\end{prop}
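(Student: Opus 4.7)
The plan is to exploit the splitting of the exterior derivative $d=d^{\mathrm{l}^{\bot}}+d^{\mathrm{l}}+d^{V}$ together with the two identities already developed just before the proposition: the decomposition
\begin{equation*}
d\tau=d^{\mathrm{l}^{\bot}}\tau+\widetilde{S}\,\omega^n+\eta
\end{equation*}
from the corollary, and the identity
\begin{equation*}
d^{V}\widetilde{S}=J^{*}(d^{\mathrm{l}^{\bot}}\tau)-\mathbf{J}
\end{equation*}
which was extracted inside the proof of the preceding proposition (note that this identity is unconditional; isotropy of $S$-curvature is not needed for it). These two formulas will essentially trivialize the equivalence.

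For the ``only if'' direction, I would assume $\eta=d\tau$ and plug this into the corollary to get $d^{\mathrm{l}^{\bot}}\tau+\widetilde{S}\,\omega^n=0$. Since $d^{\mathrm{l}^{\bot}}\tau$ is a combination of $\omega^{1},\ldots,\omega^{n-1}$ while $\widetilde{S}\,\omega^n$ lies in the $\omega^n$-direction, the linear independence of $\{\omega^{1},\ldots,\omega^{n}\}$ in $T^{*}(SM)$ forces both $d^{\mathrm{l}^{\bot}}\tau=0$ and $\widetilde{S}=0$. The latter gives $S=\mathbf{F}\widetilde{S}=0$, and feeding $d^{\mathrm{l}^{\bot}}\tau=0$ into the identity above yields $\mathbf{J}=-d^{V}\widetilde{S}=0$ (since $\widetilde{S}=0$).

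For the ``if'' direction, I would reverse the argument. Assuming $S=0$ gives $\widetilde{S}=0$, hence $d^{V}\widetilde{S}=0$. The identity then reads $J^{*}(d^{\mathrm{l}^{\bot}}\tau)=\mathbf{J}$, and combined with $\mathbf{J}=0$ yields $J^{*}(d^{\mathrm{l}^{\bot}}\tau)=0$. Since $J^{*}$ acts as an isomorphism that sends $\omega^{\alpha}$ to $\omega^{n+\alpha}$, and since the $\omega^{n+\alpha}$ are linearly independent in $T^{*}(SM)$, this forces $d^{\mathrm{l}^{\bot}}\tau=0$. Returning to the corollary gives $d\tau=\eta$.

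I don't anticipate a genuine obstacle here: the proposition is essentially a repackaging of the corollary and the computation already performed in the preceding proposition. The only point that requires mild care is the observation that $J^{*}$ is injective on the horizontal ``$\mathrm{l}^{\bot}$'' component, so that $J^{*}(d^{\mathrm{l}^{\bot}}\tau)=0$ really does imply $d^{\mathrm{l}^{\bot}}\tau=0$; this is immediate from (\ref{JiJ}) and the fact that $\{\omega^{n+1},\ldots,\omega^{2n-1}\}$ remain linearly independent as one-forms on $SM$.
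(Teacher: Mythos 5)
Your proof is correct and follows essentially the same route as the paper: both directions rest on the decomposition $d\tau=d^{\mathrm{l}^{\bot}}\tau+\widetilde{S}\,\omega^n+\eta$ from the corollary together with the identity $d^{V}\widetilde{S}=J^{*}(d^{\mathrm{l}^{\bot}}\tau)-\mathbf{J}$ established in the proof of the preceding proposition. The paper merely compresses this (invoking that proposition's statement via ``$S=0$ implies isotropic $S$-curvature'' and declaring the converse ``similar''), whereas you spell out the linear-independence and $J^{*}$-injectivity steps explicitly.
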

\begin{proof}
If $\eta=d\tau$, then $d^{\mathrm{l}^{\bot}}\tau=0$ and $S=0$ directly holds. Since $S=0$ obviously implies $S$ is isotropic, then $\mathbf{J}=0$.
The proof of the converse statement is similarly.
\end{proof}

\section{Characterization of some special Finsler manifolds}

In this section, we will use parallel transport to characterize Finsler manifolds with some special curvature properties.
For details of the parallel transport in Finsler geometry, one can refer to \cite{Aikou10,Bao,ChernShen}.

Let $(M,\mathbf{F})$ be a Finsler manifold. Let $\sigma:[0,1]\rightarrow M$ be a piecewise smooth curve from $\sigma(0)=p$ to $\sigma(1)=q$,
then we have the parallel transport
\begin{equation*}
  P_{\sigma,t}:T_pM\rightarrow T_{\sigma(t)}M, \quad {\rm for}~\forall~t\in[0,1].
\end{equation*}
Furthermore, the restricted map $P_{\sigma,t}:T_pM\setminus\{0\}\rightarrow
T_{\sigma(t)}M\setminus\{0\}$ is a norm preserving diffeomorphism,
and satisfies
\begin{align}\label{radial linear of P}
P_{\sigma,t}(\lambda y)=\lambda P_{\sigma,t}(y), \quad \forall ~\lambda>0, \forall~y\in T_pM\setminus\{0\}.
\end{align}

Let $T$ be a vertical covariant tensor field on $TM_0$, i.e. $T\equiv 0~({\rm mod}~\omega^{n+1},\ldots,\omega^{2n})$.
$T$ is called to be preserved by parallel transport along the curve $\sigma$ if
\begin{equation*}
  P_{\sigma,t}^*T_{\sigma(t)}=T_{p}\quad {\rm for}~\forall~t\in[0,1],
\end{equation*}
where $T_x=i_x^*T$ denotes the restriction of $T$ on $T_xM\setminus\{0\}$, $i_x:T_xM\hookrightarrow TM$ is the embedding mapping for any $x\in M$.
\begin{lem}\label{T}
Let $T$ be a vertical covariant tensor field on $TM_0$.
Then $T$ is preserved by the parallel transports  if and only if
\begin{equation*}
  \mathcal{L}_{X}T\equiv 0~({\rm mod}~\omega^1,\ldots,\omega^n),
\end{equation*}
for any horizontal vector field $X\in\Gamma(H(TM_0))$.
\end{lem}
\begin{proof}
If $X$ is horizontal and $T$ is vertical, then $\mathcal{L}_{X}T$ is $C^{\infty}(TM_0)$-linear in $X$.
So we only need to deal with such $X$ obtained from horizontal lift.

  The problem is local, so we assume $\sigma:(-2\epsilon,2\epsilon)\rightarrow M$ is a smooth curve passing through $\sigma(0)=p\in M$, where $\epsilon>0$.
  On $\pi^{-1}(\sigma):=\cup_tT_{\sigma(t)}M$, let $X$ be the horizontal lift vector filed of the tangent vector field $\dot{\sigma}$ of $\sigma$.
  The horizontal lift of $\sigma$ is just the integral curves of $X$.
  Under a local coordinate system, the integral curves $\hat{\sigma}(t)=(\sigma(t),y(t))=(\sigma^i(t);y^i(t))$ of $X$ satisfies the following first order ODE's,
  \begin{align} \label{equation of parellel vector field}
\frac{dy^i}{dt}+\frac{d\sigma^j}{dt}\frac{\partial G^i}{\partial y^j}(\sigma(t),y(t))=0,\quad i=1,\ldots,n.
\end{align}
By (\ref{hom of G}), (\ref{equation of parellel vector field}) and the theory of ODE's, $X$ defines a local 1-parametric group
\begin{align*}
  \varphi:(-\epsilon,\epsilon)\times \pi^{-1}(\tilde{\sigma})\rightarrow \pi^{-1}(\sigma),
\end{align*}
where $\tilde{\sigma}$ is the restriction of $\sigma$ on $(-\epsilon,\epsilon)$. Set $\varphi_t=\varphi(t,\cdot)$,
by the definition of parallel transport, we have
\begin{equation*}
 \varphi_t(y)=P_{\sigma,t}(y), \quad {\rm for} ~\forall~y\in T_pM.
\end{equation*}
In other words,
\begin{equation*}
  \varphi_t\circ i_p=i_{\sigma(t)}\circ P_{\sigma,t}, \quad {\rm for} ~\forall~t\in (-\epsilon,\epsilon).
\end{equation*}
Hence we have
\begin{align*}
  i_p^*\mathcal{L}_XT=i_p^*\lim_{t\rightarrow0}\frac{\varphi_t^*T-T}{t}=\lim_{t\rightarrow0}\frac{P_{\sigma,t}^*i_{\sigma(t)}^*T-T_p}{t}
  =\lim_{t\rightarrow0}\frac{P_{\sigma,t}^*T_{\sigma(t)}-T_p}{t}.
\end{align*}
As $\mathcal{L}_XT\equiv 0~({\rm mod}~\omega^1,\ldots,\omega^n)$ if and only if $i_p^*\mathcal{L}_XT=0$ for any $p\in M$, the proof is complete.

\end{proof}

In \cite{Ichijyo78}, Ichijy\={o} proved that a Finsler manifold is a Landsberg manifold if and only if $\hat{g}$ is preserved by any parallel transports. This result can also be found in \cite{Bao,ChernShen}.
Following Lemma \ref{T}, Proposition \ref{horizontal lie g A} and \ref{horizontal lie eta}, we have the following proposition
in the same spirit.
\begin{prop}\label{further ich}
 Let $(M,\mathbf{F})$ be a Finsler manifold.  Then
  \begin{enumerate}
\item[(i)] $M$ is a Berwald manifold if and only if $\hat{A}$ is preserved by any parallel transports.
\item[(ii)] $M$ satisfies ${\rm tr}P=0$  if and only if $\eta$ is preserved by any parallel transports.
\end{enumerate}
\end{prop}
\begin{proof}[Proof of Theorem \ref{2}]
If $M$ is a Berwald manifold, Lemma \ref{T}, Proposition \ref{horizontal lie g A} and  \ref{horizontal lie eta} implies
that $\hat{g}$ and $\eta$ are preserved by any parallel transports. In the opposite direction, Theorem \ref{theo equivalence} and (\ref{radial linear of P}) implies
parallel transports are linear. Then Lemma \ref{lemma 2.2} implies $\hat{A}$ is preserved by all parallel transports. Hence (i) of Proposition \ref{further ich} shows $M$ is a Berwald manifold.
\end{proof}
We would like to list the following well known result as a corollary, even though the proof is indirect.
\begin{mcor}[\cite{Ichijyo76}]
 Let $(M,\mathbf{F})$ be a Finsler manifold. $M$ is a Berwald manifold if and only if all parallel transports are linear maps.
\end{mcor}
\begin{proof}
  If $M$ is a Berwald manifold, then $\hat{g}$ and $\eta$ are preserved by all parallel transports.
  By Theorem \ref{theo equivalence} and (\ref{radial linear of P}), all parallel transports are linear. In the opposite direction, if all parallel transports are linear,
  Lemma \ref{lemma 2.2} implies $\hat{A}$ is preserved. Then (i) of Proposition \ref{further ich} shows that $M$ is a Berwald manifold.
\end{proof}

\begin{proof}[Proof of Theorem \ref{3}]
\

(1)$\Rightarrow$(2): By Proposition \ref{prop eta=dtau}, $L=0$ and $S=0$ implies $\eta=d\tau$.
\par
(2)$\Rightarrow$(3): It is obvious.
\par
(3)$\Rightarrow$(4): By (\ref{deta}), $d\eta=0$ implies ${\rm tr}P=0$. $L=0$ and ${\rm tr}P=0$ implies $\hat{g}$ and $\eta$ are preserved by all parallel transports. By Theorem \ref{2}, $M$ is a Berwald manifold.
\par
 (4)$\Rightarrow$(1):  It is clear.
\end{proof}

\begin{proof}[Proof of Theorem \ref{theo L=0 and trB=0}]
 For Landsberg manifolds, the mean Berwald curvature coincides with the mean h-v curvature of Chern connection ${\rm tr}P$ (cf. \cite{BaoChernShen}, p. 67). Similar to (3)$\Rightarrow$(4) in Theorem \ref{3}, we complete the proof.
\end{proof}
\begin{remark}
  In Finsler geometry, some of concepts are \textit{priori} than linear connections.
  Spray, nonlinear connection, parallel transport, the three tensors $\hat{g}$, $\hat{A}$ and $\eta$ are such concepts.
  Berwald manifold and Landsberg manifold are often defined via linear connections.
  However,  Ichijy\={o}'s results point out these concepts are independent to linear connections.
  In contrast with some known results, for example Theorem 6.3 and 6.12 in \cite{SV}, the new characterizations
  of Berwald manifold in our paper do not involve any linear connection.
\end{remark}

\end{document}